\newtheorem{theorem}{Theorem}[section]
\newtheorem{corollary}[theorem]{Corollary}
\newtheorem{definition}[theorem]{Definition}
\newtheorem{lemma}[theorem]{Lemma}
\newenvironment{proof}[1][Proof]{\textbf{#1.} }{\hfill\rule{0.5em}{0.5em}}
{\catcode`\@=11\global\let\AddToReset=\@addtoreset
	
	\setlength{\oddsidemargin}{-0.05in}
	\setlength{\evensidemargin}{-0.05in}
	\setlength{\textwidth}{14.5cm}
	\textheight=22.15cm
	\voffset=-1truecm
	\hoffset=+1.1truecm
	
	
\title{Applications of optimal error bounds for some generalized two-step iterative processes in Banach spaces}
	
\author{Tan-Phuc Nguyen, Thai-Hung Nguyen\footnote{Corresponding author.},  Tien-Khai Nguyen, Cong-Duy-Nguyen Nguyen, Trung-Hieu Huynh\thanks{Department of Mathematics, Ho Chi Minh City University of Education, Ho Chi Minh city, Vietnam; Emails: \texttt{thaihungspt2003@gmail.com; tanphucb11@gmail.com; ngtienkhai59@gmail.com; nguyennguyen04012004@gmail.com; huynhhieu2004@gmail.com}}}
	
	\date{\today} 
	
	
\begin{document}
		\large
		\maketitle
		\begin{abstract}

In a recent paper~\cite{paper2}, we proposed the concept of optimal error bounds for an iterative process, which allows us to obtain the convergence result of the iterative sequence to the common fixed point of the nonexpansive mappings in Banach spaces. Moreover, we also achieve the comparison results between different iterative processes via optimal error bounds. In this paper, we continue to determine optimal error bounds for more general iterative processes which were studied by many authors, such as in~\cite{DungHieu} and references therein. From there, the convergence results are obtained and the convergence rates of these iterative processes are determined under some sufficient conditions on sequences of parameters.

			\medskip
			
			\medskip
			\noindent {\emph{Keywords:} Fixed point theory; $(\kappa,\alpha)$-nonexpansive mappings; Generalized iterative processes; Optimal error bounds; Convergence rate.} 
		\end{abstract}
		\tableofcontents
		
		
\section{Introduction}\label{Sec1}

Many mathematical models can lead to the problem of finding fixed points for a mapping. Finding such points is crucial in various areas of mathematics, as they often represent stable states or equilibrium solutions in dynamic systems. Fixed point problems also have applications across many fields both within and beyond mathematics, such as approximation theory, optimization theory, game theory, finance, machine learning, \dots One of the most fundamental results of this theory is the Banach  fixed point theorem, which asserts the existence of the unique fixed point of a contraction mapping $T:\mathbb{X}\to\mathbb R$, where $(\mathbb X,\lVert\cdot\rVert)$ is a Banach space in $\mathbb R$. Sometimes, finding the exact fixed point is not feasible, so methods to approximate it are sought. This fixed point can be approximated by using the following Picard iteration
  \begin{align*}\label{Picard}
          x_0\in \mathbb X, \ 
        x_{n+1}=T(x_n), \quad n\in \mathbb N.\tag{1.1}
     \end{align*}
    There are many ideas for constructing the iteration such that the limit of the iteration is the fixed point of the mapping under consideration. These methods aim to enhance the applicability while simultaneously increasing the convergence rate of the iteration to the fixed point. For example, in the case where $T$ is not a contraction mapping, the convergence of the iterative sequence~\eqref{Picard} to the fixed point of $T$ is not guaranteed. One of the earliest methods to overcome this difficulty is the iterative sequence proposed by Mann in~\cite{Mann}, defined by
    \begin{align*}\label{Mann}
            x_0\in \mathbb X,\
            x_{n+1}=(1-a_n)x_n+a_nT(x_n),\quad n\in \mathbb N.\tag{1.2}
         \end{align*}
    where $(a_n)_{n \in \mathbb{N}}\subset [0,1]$ is a  parameter sequence satisfying 
    $$\lim\limits_{n\to\infty} a_n=0 \ \mbox{ and } \ \displaystyle\sum\limits_{n \in \mathbb{N}}{a_n}=+\infty.$$ 
    If the sequence $(a_n)$ is a constant sequence, that is $a_n = \theta \in [0, 1]$ for all $n \in \mathbb N$, the Mann iteration~\eqref{Mann} is refered to as a Krasnoselskij iteration. With appropriate assumptions about the sequence $(a_n)$, we can prove the convergence of the Mann iteration~\eqref{Mann} to the fixed point $x^*$ of the mapping $T$.  A substantial amount of research has been devoted to studying the convergence of Mann iterations for various types of nonlinear mappings across different function spaces. However, it has been shown that the Mann iteration converges to the fixed point more slowly than the Picard iteration under many different assumptions regarding the mapping $T$ (cf.~\cite{Popescu2007}). Consequently, many mathematicians have proposed two or three-step iterations to improve the existing iterations (cf.~\cite{D70, Halpern, Ishikawa, KX2005}). One of the typical two-step iterations that has been proposed is the Ishikawa iteration in~\cite{Ishikawa}
    \begin{align*}\label{1.3}
			\begin{cases} 
   x_0\in \mathbb X,\\
   y_n = (1-a_n)x_n + a_nT(x_n), \\ x_{n+1} = (1-b_n)x_n + b_nT(y_n), \quad n \in \mathbb N,\tag{1.3}
			\end{cases}
		\end{align*}
where $(a_n),(b_n)$ are two real sequences in $[0,1]$. Many different ideas have been proposed
to improve the Ishikawa iterative sequence. To be more precise, in~\cite{TT1998,YC2007}, an iterative process was defined as follows  
\begin{align*}\label{1.4}
    \begin{cases} 
   x_0\in \mathbb X,\\
   y_n = (1-a_n)x_n + a_nT_1(x_n), \\ 
   x_{n+1} = (1-b_n)x_n + b_nT_2(y_n), \quad n \in \mathbb N,\tag{1.4}
\end{cases}
\end{align*}
where $T_1,T_2$ are non-expansive mappings. It can be seen that the iteration sequence~\eqref{1.3} is a specific case of~\eqref{1.4} when $T_1=T_2=T$. Many authors developed conditions for the parameter sequences $(a_n),(b_n)$ in \eqref{1.4} to receive interesting convergence results to the common fixed point of $T_1,T_2$. Moreover, over the years, many other iterative sequences also have been studied and improved to achieve small errors and higher rates of convergence to the common fixed point (cf.~\cite{Berinde2008},\cite{CCK2003},\cite{KX2005},\cite{Noor}-\cite{SP2011},\cite{Xue}).\medskip

Recently, in~\cite{paper2} we proposed the concept of \textit{optimal error bounds} to investigate the convergence of multi-step iterative sequences to a common fixed point of two given non-expansive mappings $T_1$ and $T_2$. In detail, we used \textit{optimal error bounds} to construct necessary and sufficient conditions for parameter sequences in the corresponding two-step Ishikawa iterative sequences to converge, and we compared the convergence rates between them. This approach is entirely new compared to previous research, based on the idea that there is no better bound without additional assumptions regarding the mappings, even in the general case. We find that this iterative method can be extended to multi-step iterative sequences converging to a common fixed point of multiple non-expansive mappings. Inspired by this, our purpose in this paper is using \textit{optimal error bounds} to analyze the convergence and rate of convergence of more general iterative sequences in~\cite{DungHieu} and some other references. Specifically, we also establish necessary and sufficient conditions of \textit{optimal error bounds} for some generalized two-step iterative sequences to converge to a common fixed point of two or more non-expansive mappings, and then compare the convergence rates between the considered sequences. Thereby, we confirm the potential of the concept \textit{optimal error bounds} in studying the convergence and convergence rates of the iterative methods to the fixed point of some given non-expansive mappings.\medskip

The rest of the paper is organized as follows. In the next section, we recall the definition of optimal error bounds for the iterative process and the representations of some iterative processes to be investigated. In Section~\ref{Sec3}, we will establish the optimal error bounds for two general iterative processes (IG) and (G). Convergence of iterative sequences is also presented in this section as the first application of optimal error bounds. The next application of optimal error bounds is the comparison result of convergence rates of iterative sequences. These results are presented in the last section. 

\newpage
\section{Some generalized two-step processes}\label{Sec2}

First, we introduce the definition of optimal error bounds for a generalized iterative sequence and the iterative sequences examined in this paper. It is worth noting that the optimal error bounds definition we will present below is a generalization of the one found in~\cite{paper2}. Throughout this article, we always assume $\mathbb{X}$ is a Banach space in $\mathbb{R}$ and $\Omega$ is a closed convex subset in  $\mathbb{X}$. 

\begin{definition}[$(\kappa,\alpha)$-nonexpansive mapping] Given $0 \le \kappa \le \alpha \le 1$, a mapping $T: \Omega \to \Omega$ is called $(\kappa,\alpha)$-nonexpansive if
\begin{align}\notag 
\kappa \|\xi-\eta\| \le \|T(\xi) - T(\eta)\| \le \alpha \|\xi-\eta\|, 
\end{align}
for every $\xi,\eta \in \Omega$. In this case, we write $T \in \mathcal{N}_{\kappa,\alpha}(\Omega)$. If $\alpha<1$, we say that $T$ is $(\kappa,\alpha)$-contraction, and write $T \in \mathcal{C}_{\kappa,\alpha}(\Omega)$. Moreover, if $\kappa=0$ then we simply write $T \in \mathcal{N}_{\alpha}(\Omega)$ or $T \in \mathcal{C}_{\alpha}(\Omega)$ provided $\alpha<1$.
\end{definition} 
With these notations, it is possible to check that 
\begin{align*}
\mathcal{C}_{\kappa,\alpha}(\Omega) \subset \mathcal{C}_{\alpha}(\Omega) \cap \mathcal{N}_{\kappa,\alpha}(\Omega),  \mbox{ and } \mathcal{C}_{\alpha}(\Omega) \cup \mathcal{N}_{\kappa,\alpha}(\Omega) \subset \mathcal{N}_{\alpha}(\Omega).
\end{align*}

\begin{definition}[Abstract iterative process]
Let $K \in \mathbb{N}^*$ and $2K$ parameters $\kappa_k,\alpha_k \in [0,1]$ for  $k = 1,2,...,K$. An abstract iterative process $(\mathcal{P})$ can be defined as follows: for nonexpansive mappings $T_k \in \mathcal{N}_{\kappa_k,\alpha_k}(\Omega)$ satisfying $T_k(x^*) = x^*$, we define
\begin{align}\tag{$\mathcal{P}$}
\label{P}
 x_0 \in \Omega, \ x_{n+1} = \mathcal{P}_{T_k}(x_n), \quad n \in \mathbb{N},
\end{align}
where $\mathcal{P}_{T_k}: \Omega \to \Omega$ and $\mathcal{P}_{T_k}(x^*)=x^*$. 
\end{definition}

It is easy to check that if $x_0 = x^*$ then $x_n = x^*$ for all $n \in \mathbb{N}$. For his reason, without loss of generality, we may assume that $x_0 \neq x^*$.

\begin{definition}[Optimal error bounds]
Assume that $(\mathcal{L}_n^\mathcal{P})_{n\in \mathbb{N}}$ and $(\mathcal{U}_n^\mathcal{P})_{n\in \mathbb{N}}$ are two non-negative sequences such that the following inequality
\begin{align}\label{upper-bound}
\mathcal{L}_n^\mathcal{P} \le \frac{\|x_{n+1} - x^*\|}{\|x_0 - x^*\|} \le \mathcal{U}_n^\mathcal{P} , \quad \forall n \in \mathbb{N},
\end{align}
holds for any nonexpansive mappings $T_k \in \mathcal{N}_{\kappa_k,\alpha_k}(\Omega)$ satisfying $T_k(x^*) = x^*$. We say that the sequence $(\mathcal{L}_n^\mathcal{P})_{n\in \mathbb{N}}$ ($(\mathcal{U}_n^\mathcal{P})_{n\in \mathbb{N}}$ respectively) is optimal lower error bound (optimal upper error bound respectively) of the iteration~\eqref{P} if the corresponding equality in~\eqref{upper-bound} holds for some $T_k$.
\end{definition}

In this paper, we will establish the optimal error bounds for some two-step models of the same type as $(\mathcal{P})$. More precisely, we consider four generalized models which are presented as below. We always assume that $n \in \mathbb{N}$ and $x_0 \in \Omega$ is the initial point of each iterative sequence. The improved Ishikawa iteration (I) is defined by:		
\begin{align}\tag{I}\label{xn-I}
\begin{cases} y_{n} = (1-a_n)x_n + a_nT_1(x_n), \\ x_{n+1} = (1-b_n)x_n + b_nT_2(y_n).\end{cases}
\end{align}
We remark that the iteration~\eqref{xn-I} can be rewritten as follows
\begin{align*}
x_{n+1} = (1-b_n)x_n + b_nT_2\big((1-a_n)x_n + a_nT_1(x_n)\big).
\end{align*}
Hence, it can be considered as the abstract iterative process~\eqref{P} associated to the mapping $\mathcal{P}_{T_1,T_2}: \Omega \to \Omega$ below
\begin{align*}
\mathcal{P}_{T_1,T_2}(\xi) = (1-b_n)\xi + b_nT_2\big((1-a_n)\xi + a_nT_1(\xi)\big), \quad \xi \in \Omega.
\end{align*}
In the same spirit, we also consider more general two-step iterations, such as: the modified Ishikawa iterations (IM):	
\begin{align}\tag{IM}\label{xn-IM}
\begin{cases} y_{n} = (1-a_n)x_n + a_nT_1(x_n), \\ x_{n+1} = (1-b_n)y_n + b_nT_2(y_n);\end{cases}
\end{align}
the generalized Ishikawa iteration (IG):
\begin{align}\tag{IG}\label{xn-IG}
\begin{cases} y_{n} = (1-a_n)x_n + a_nT_1(x_n), \\ x_{n+1} = (1-b_n)T_1(y_n) + b_nT_2(y_n); \end{cases}
\end{align}
and the generalized two-step iteration (G):
\begin{align}\tag{G}\label{xn-G}
\begin{cases} y_{n} = (1-a_n)S_1(x_n) + a_nT_1(x_n), \\ x_{n+1} = (1-b_n)S_2(y_n) + b_nT_2(y_n).\end{cases}	
\end{align}

\section{Convergence analysis} \label{Sec3}
In this section, we establish the optimal lower error bounds (OLEBs) and the optimal upper error bounds (OUEBs) of two iterative processes~\eqref{xn-IG} and~\eqref{xn-G}. By controlling the sequence of OEBs, we derive some proper conditions for the optimal error bounds to converge to 0, ensuring that these iterative processes converge to a common fixed point of some given nonexpansive mappings. Let us first recall two fundamental lemmas which are useful in our works.
\begin{lemma}\label{lem1}
For every real number $x > -1$, we have 
\begin{align}\label{ln-ineq}
\dfrac{x}{x+1} \le \ln(1+ x) \le x.
\end{align}
\end{lemma}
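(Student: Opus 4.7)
The plan is to prove the two inequalities separately by a standard calculus argument on auxiliary functions, reducing each one to showing that a suitable function is monotone about $x=0$ and vanishes there.

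For the upper bound $\ln(1+x) \le x$, I would define $f(x) = x - \ln(1+x)$ on $(-1,\infty)$ and compute $f'(x) = 1 - \frac{1}{1+x} = \frac{x}{1+x}$. The derivative is negative on $(-1,0)$ and positive on $(0,\infty)$, so $f$ attains its global minimum at $x=0$ with $f(0) = 0$. Hence $f(x) \ge 0$ throughout $(-1,\infty)$, which is exactly the right-hand inequality in \eqref{ln-ineq}.

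For the lower bound $\frac{x}{x+1} \le \ln(1+x)$, I would similarly define $g(x) = \ln(1+x) - \frac{x}{x+1}$ on $(-1,\infty)$ and compute $g'(x) = \frac{1}{1+x} - \frac{1}{(1+x)^2} = \frac{x}{(1+x)^2}$. Again the derivative changes sign from negative to positive at $x=0$, so $g$ is minimized at $x=0$ with $g(0)=0$. Therefore $g(x)\ge 0$ on $(-1,\infty)$, which yields the left-hand inequality.

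There is essentially no obstacle here; the lemma is a standard textbook inequality, and both halves follow from the same first-derivative test. The only point worth being mildly careful about is that the domain $x>-1$ ensures $1+x>0$, so the logarithm and the denominators $1+x$ and $(1+x)^2$ are well defined and of the correct sign throughout the argument. Equality in both inequalities holds precisely at $x=0$, which could be recorded as a remark if it is useful later in Section~\ref{Sec3}.
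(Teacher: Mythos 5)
Your proof is correct: both derivative computations check out, the sign analysis on $(-1,\infty)$ is valid since $1+x>0$ there, and the first-derivative test at $x=0$ gives both halves of \eqref{ln-ineq}. The paper itself states Lemma~\ref{lem1} without proof (it is merely recalled as a standard fact), so there is nothing to compare against; your argument is the standard textbook one and would serve as a complete proof if one were to be included.
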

		
\begin{lemma}\label{lem2}
Assume that sequence $(a_n) \subset [0, 1]$ and bounded sequence $(u_n)$  satisfies 
$$1 - u_na_n > 0, \quad \mbox{ for all } n\in \mathbb{N}.$$ 
Then, two series $\displaystyle \sum_{n \in \mathbb{N}} \dfrac{a_n}{1- u_na_n}$ and $\displaystyle \sum_{n \in \mathbb{N}} a_n$ jointly converge or jointly diverge. In this case, we will denote by
\begin{align*}
\sum_{n \in \mathbb{N}} \dfrac{a_n}{1- u_na_n} \sim \sum_{n \in \mathbb{N}} a_n.
\end{align*}
\end{lemma}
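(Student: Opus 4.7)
The plan is a one-line comparison test on each side, where boundedness of $(u_n)$ does the work. Let $M > 0$ be a constant such that $|u_n| \le M$ for all $n$; such $M$ exists by hypothesis. Since $a_n \in [0,1]$, this yields the two elementary bounds
\begin{align*}
1 - u_n a_n \;\le\; 1 + |u_n| a_n \;\le\; 1 + M,
\end{align*}
and of course $1 - u_n a_n > 0$ by assumption.

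First I would handle the implication $\sum a_n = +\infty \Rightarrow \sum \tfrac{a_n}{1 - u_n a_n} = +\infty$. The upper bound on $1 - u_n a_n$ immediately gives
\begin{align*}
\frac{a_n}{1 - u_n a_n} \;\ge\; \frac{a_n}{1 + M},
\end{align*}
so the comparison test is enough. Contrapositively, this also handles $\sum \tfrac{a_n}{1 - u_n a_n} < +\infty \Rightarrow \sum a_n < +\infty$.

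For the remaining direction, suppose $\sum a_n < +\infty$. Then $a_n \to 0$, and since $(u_n)$ is bounded, $u_n a_n \to 0$. Hence $1 - u_n a_n \to 1$, so there exists $N$ with $1 - u_n a_n \ge \tfrac{1}{2}$ for all $n \ge N$. On the tail this gives
\begin{align*}
\frac{a_n}{1 - u_n a_n} \;\le\; 2 a_n \qquad (n \ge N),
\end{align*}
and comparison with the convergent series $\sum a_n$ shows $\sum \tfrac{a_n}{1 - u_n a_n}$ converges as well. The two implications together yield the equivalence $\sum \tfrac{a_n}{1 - u_n a_n} \sim \sum a_n$.

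There is no serious obstacle here; the only subtle point is that one cannot deduce a two-sided comparison uniformly in $n$ (since $1 - u_n a_n$ may be arbitrarily close to $0$ when $a_n \not\to 0$ and $u_n$ is close to $1/a_n$), so the argument must be split: a uniform lower bound on the ratio $\tfrac{a_n}{1 - u_n a_n}/a_n$ for the divergent case, and an eventual upper bound for the convergent case, where $a_n \to 0$ rescues integrability.
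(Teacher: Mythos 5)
Your proof is correct and complete: the uniform lower bound $\frac{a_n}{1-u_na_n}\ge\frac{a_n}{1+M}$ settles the divergent case, and the observation that $a_n\to 0$ forces $1-u_na_n\to 1$ settles the convergent case, which together exhaust all possibilities. The paper itself gives no proof of this lemma---it only cites Lemma 2.7 of the earlier work \cite{paper2}---but your comparison-test argument is the standard one and there is nothing to add.
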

The proof of Lemma~\ref{lem2} can be found in \cite[Lemma 2.7]{paper2}.
		
\subsection{Generalized Ishikawa iteration (IG)}
		
Let us consider four constants $\alpha_1, \alpha_2, \kappa_1, \kappa_2 \in [0, 1]$ such that $0 < \alpha_1 + \alpha_2 < 2$, $\kappa_1 \le \alpha_1$, $\kappa_2 \le \alpha_2$ and two parameter sequences $(a_n), (b_n) \subset [0, 1]$. For every two mappings $T_1 \in \text{Ne}_{\kappa_1, \alpha_1}(\Omega)$, $T_2 \in \text{Ne}_{\kappa_2, \alpha_2}(\Omega)$ with the common fixed point $x^* \in \mathcal{F}(T_1, T_2)$, the iteration $(x_n)$ defined by
		\begin{align}\tag{IG}\label{xn-IG}
			\begin{cases} x_0 \in \Omega,\\ y_{n} = (1-a_n)x_n + a_nT_1(x_n), \\ x_{n+1} = (1-b_n)T_1(y_n) + b_nT_2(y_n), \ n \in \mathbb{N}, \end{cases}
		\end{align}
		is called the generalized Ishikawa iteration (IG) of two-step iterative process.
		
		\begin{theorem}\label{theo-U-IG}
			The optimal upper bound of~\eqref{xn-IG} is determined by
			\begin{align}\label{Un-IG}
				\mathcal{U}_n^{IG} := \displaystyle\prod_{k=0}^{n}[1-(1 - \alpha_1) a_k][\alpha_1+(\alpha_2 - \alpha_1)b_k],
			\end{align}
			for all $n \in \mathbb{N}$. Moreover, the sequence $(\mathcal{U}^{IG}_n)$ converges to $0$ if and only if
			\begin{align}\label{iff-Un-IG} 
				(1-\alpha_1) \sum_{k \in \mathbb{N}} a_k + (1-\alpha_1)\sum_{k \in \mathbb{N}} (1-b_k) + (1-\alpha_2)\sum_{k \in \mathbb{N}} b_k = +\infty. 
			\end{align}
		\end{theorem}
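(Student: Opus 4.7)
\textbf{Plan of proof for Theorem~\ref{theo-U-IG}.}

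The plan is to prove the upper bound half first by a one-step contraction estimate that I then iterate, and then prove optimality by exhibiting explicit mappings achieving equality. For the one-step estimate, I will use the intermediate point $y_n$: from the nonexpansivity of $T_1$ and $T_1(x^*)=x^*$,
\begin{align*}
\|y_n-x^*\| \le (1-a_n)\|x_n-x^*\| + a_n \alpha_1 \|x_n-x^*\| = [1-(1-\alpha_1)a_n]\,\|x_n-x^*\|,
\end{align*}
and then from $x_{n+1}-x^* = (1-b_n)(T_1(y_n)-x^*)+b_n(T_2(y_n)-x^*)$ and the nonexpansivity of $T_1,T_2$,
\begin{align*}
\|x_{n+1}-x^*\| \le [(1-b_n)\alpha_1+b_n\alpha_2]\,\|y_n-x^*\| = [\alpha_1+(\alpha_2-\alpha_1)b_n]\,\|y_n-x^*\|.
\end{align*}
Chaining and telescoping yields $\|x_{n+1}-x^*\|/\|x_0-x^*\| \le \mathcal{U}_n^{IG}$. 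For optimality, I will choose the linear mappings $T_1(\xi) := \alpha_1(\xi-x^*)+x^*$ and $T_2(\xi):=\alpha_2(\xi-x^*)+x^*$, which clearly satisfy $T_k(x^*)=x^*$ and belong to $\mathcal{N}_{\alpha_k,\alpha_k}(\Omega) \subset \mathcal{N}_{\kappa_k,\alpha_k}(\Omega)$ since $\kappa_k \le \alpha_k$. With this specific choice both inequalities become equalities, giving $\|x_{n+1}-x^*\| = \mathcal{U}_n^{IG}\,\|x_0-x^*\|$.

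For the characterization~\eqref{iff-Un-IG}, I would take logarithms: since all factors of $\mathcal{U}_n^{IG}$ lie in $[0,1]$, we have $\mathcal{U}_n^{IG}\to 0$ if and only if
\begin{align*}
-\ln \mathcal{U}_n^{IG} = \sum_{k=0}^{n}\bigl(-\ln[1-(1-\alpha_1)a_k]\bigr) + \sum_{k=0}^{n}\bigl(-\ln[\alpha_1+(\alpha_2-\alpha_1)b_k]\bigr) \longrightarrow +\infty.
\end{align*}
The key rewrite for the second factor is the identity
\begin{align*}
\alpha_1+(\alpha_2-\alpha_1)b_k = 1 - \bigl[(1-\alpha_1)(1-b_k)+(1-\alpha_2)b_k\bigr],
\end{align*}
which casts each logarithm in the form $-\ln(1-s_k)$ with $s_k\ge 0$ bounded (strictly less than $1$ under the hypothesis $0<\alpha_1+\alpha_2<2$, modulo the trivial edge cases where a factor vanishes and $\mathcal{U}_n^{IG}=0$ from some index onward). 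Then Lemma~\ref{lem1} gives the two-sided comparison $s_k \le -\ln(1-s_k)\le s_k/(1-s_k)$, so by Lemma~\ref{lem2},
\begin{align*}
\sum_k -\ln[1-(1-\alpha_1)a_k] \sim (1-\alpha_1)\sum_k a_k, \qquad
\sum_k -\ln[\alpha_1+(\alpha_2-\alpha_1)b_k] \sim \sum_k \bigl[(1-\alpha_1)(1-b_k)+(1-\alpha_2)b_k\bigr].
\end{align*}
Since both series are non-negative, the total diverges iff at least one summand series diverges, which is exactly condition~\eqref{iff-Un-IG}.

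The routine part is the telescoping one-step estimate; the slightly subtle step is the convergence criterion, where I must correctly rewrite $\alpha_1+(\alpha_2-\alpha_1)b_k$ as $1$ minus a convex combination of $1-\alpha_1$ and $1-\alpha_2$ to apply Lemma~\ref{lem2}, and then correctly combine the two equivalence relations into the single criterion~\eqref{iff-Un-IG}. The main obstacle I anticipate is bookkeeping around the edge cases ($\alpha_1=0$ and $a_k=1$, or $\alpha_1=\alpha_2=0$) where a factor of $\mathcal{U}_n^{IG}$ may vanish and the logarithm is undefined; these cases must be separated out and the asymptotic equivalence only applied on the tail where all factors are strictly positive.
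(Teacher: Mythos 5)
Your proposal is correct and follows essentially the same route as the paper: the same chained one-step estimates giving $\|x_{n+1}-x^*\|\le[1-(1-\alpha_1)a_n][\alpha_1+(\alpha_2-\alpha_1)b_n]\|x_n-x^*\|$, the same linear scaling maps to achieve equality, and the same logarithm argument combining Lemma~\ref{lem1} and Lemma~\ref{lem2} for the divergence criterion. The only differences are cosmetic: you realize the extremal maps as affine contractions toward $x^*$ on the given $\Omega$ rather than specializing $\Omega$, and you explicitly flag the degenerate cases where a factor of $\mathcal{U}_n^{IG}$ vanishes (so the logarithm is undefined), a point the paper passes over silently.
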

		\begin{proof}
			Since $T_1 \in \text{Ne}_{\kappa_1, \alpha_1}(\Omega)$ and $T_1(x^*) = x^*$, so for all $n\ge 1$, we have
			\begin{align*}
				\|y_n-x^*\|&=\|(1-a_n)(x_n-x^*)+a_n(T_1(x_n)-T_1(x^*))\| \\
				& \le (1-a_n) \|x_n-x^*\|+a_n\|T_1(x_n)-T_1(x^*)\|\\
				& \le (1-a_n)\|x_n-x^*\|+ \alpha_1a_n\|x_n-x^*\|\\
				& = [1 - (1 - \alpha_1)a_n]\|x_n-x^*\|.
			\end{align*}
			Similarly, since $T_1 \in \text{Ne}_{\kappa_1, \alpha_1}(\Omega)$, $T_2 \in \text{Ne}_{\kappa_2, \alpha_2}(\Omega)$ and $T_1(x^*) = T_2(x^*) = x^*$, we deduce that
			\begin{align}\label{est-IG} 
				\|x_{n+1} - x^*\| \le [1 - (1-\alpha_1)a_n][\alpha_2+(\alpha_1-\alpha_2)b_n]\|x_n-x^*\|,
			\end{align}
			for all $n \in \mathbb{N}$. From~\eqref{est-IG}, it yields that 
			\begin{align}\label{est-Un} 
				\|x_{n+1} - x^*\| \le \mathcal{U}_n^{IG} \|x_0-x^*\|,
			\end{align}
			for all $n \in \mathbb{N}$, which $\mathcal{U}_n^{IG}$ is defined by~\eqref{Un-IG}. \\
			
			It is not difficult to check that with the open set  $\displaystyle\Omega=\left(0,\frac{1}{2}\right)$, $\alpha_1, \alpha_2 \in [0, 1]$ and two nonexpansive mappings $T_1,T_2$ defined by
			$$T_1(x)=\alpha_1x, \quad  T_2=\alpha_2x, \ x \in \Omega,$$
			then we have $x^*=0$ and $\|x_{n+1}-x^*\|=\mathcal{U}^{IG}_{n}\|x_0-x^*\|, \text{ for all } n \in \mathbb{N}.$ Therefore, we conclude that $\mathcal{U}^{IG}_{n}$ is the optimal upper bound of  iteration~\eqref{xn-IG}. \\
			
			Next, we will show that the sequence $(\mathcal{U}_n^{IG})$ converges to $0$ if and only if~\eqref{iff-Un-IG} is satisfied. Indeed, by using inequality~\eqref{ln-ineq}, we have
			\begin{align*}
				\displaystyle (\alpha_1-1)\sum_{k=0}^{n} & \dfrac{a_k}{1-(1 - \alpha_1 )a_k}  +\sum_{k=0}^{n}\frac{\alpha_1 -1 + (\alpha_2 - \alpha_1)b_k}{\alpha_1 + (\alpha_2 - \alpha_1)b_k}  \le \ln(\mathcal{U}_n^{IG}) \\ & \le (\alpha_1-1)\sum_{k=0}^{n} a_k + \sum_{k=0}^{n} [(\alpha_1-1)+(\alpha_2 - \alpha_1)b_k] \\ &= (\alpha_1-1)\sum_{k=0}^{n}a_k + (\alpha_1-1)\sum_{k=0}^{n} (1 - b_k) + (\alpha_2-1) \sum_{k=0}^{n} b_k,
			\end{align*}
			On the other hand, thanks to Lemma~\ref{lem2}, we see that
			\begin{align*}
				\displaystyle \sum_{n \in \mathbb{N}} \dfrac{a_n}{1-(1 - \alpha_1)a_n} &\sim  \sum_{n \in \mathbb{N}} a_n, 
			\end{align*}
			and
			\begin{align*}
				\sum_{n \in \mathbb{N}} \dfrac{\alpha_1 - 1+(\alpha_2 - \alpha_1) b_n}{\alpha_1+(\alpha_2 - \alpha_1) b_n} &\sim  \sum_{n \in \mathbb{N}} [\alpha_1- 1+(\alpha_2 - \alpha_1)b_n] \\
				&\sim (\alpha_1-1) \sum_{n \in \mathbb{N}} (1 - b_n) + (\alpha_2-1) \sum_{n \in \mathbb{N}} b_n.
			\end{align*}
It allows us to conclude that~\eqref{iff-Un-IG} is the necessary and sufficient condition for the convergence of $\mathcal{U}^{IG}_{n}$ to $x^*$. We complete the proof.
		\end{proof}

\begin{theorem}\label{theo-L-IG}
Assume that  
\begin{align}\label{need-IG}
			a_k < \dfrac{1}{1 + \kappa_1} \ \text{ and } \ b_k < \dfrac{\kappa_1}{\kappa_1 + \alpha_2}, \ \text{ for all } k \in \mathbb{N}.
		\end{align}			
Then the optimal lower bound of~\eqref{xn-IG} is determined by
			\begin{align}\label{Ln-IG}
				\mathcal{L}_n^{IG} := \displaystyle \prod_{k=0}^n  [1 - (1+ \kappa_1)a_k][\kappa_1 - (\kappa_1 + \alpha_2)b_k],
			\end{align}
			for all $n \in \mathbb{N}$. Moreover, 
			\begin{enumerate}
				\item [(i)] If $\kappa_1 < 1$ then $\displaystyle \lim\limits_{n\to \infty} \mathcal{L}_n^{IG} = 0$ for every $(a_n), (b_n)$ satisfy~\eqref{need-IG}.
				\item [(ii)] If $\kappa_1 = 1$ then we have $\displaystyle \lim\limits_{n\to \infty} \mathcal{L}_n^{IG} = 0 \Longleftrightarrow \sum_{n \in \mathbb{N}} (a_n + b_n) = +\infty$. 
			\end{enumerate}
		\end{theorem}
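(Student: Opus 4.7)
The plan is to mirror the proof of Theorem~\ref{theo-U-IG} with every triangle inequality reversed. I would proceed in three stages: first, derive a pointwise lower estimate by applying $\|A+B\|\ge\|A\|-\|B\|$ at each step of the iteration; second, exhibit a concrete pair $T_1,T_2$ that makes all those reverse inequalities simultaneously tight; third, analyse the asymptotics of the resulting product $\mathcal{L}_n^{IG}$.

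For the pointwise estimate, starting from $y_n-x^{*}=(1-a_n)(x_n-x^{*})+a_n(T_1(x_n)-T_1(x^{*}))$, the reverse triangle inequality combined with the $(\kappa_1,\alpha_1)$-nonexpansivity of $T_1$ gives $\|y_n-x^{*}\|\ge[1-(1+\kappa_1)a_n]\|x_n-x^{*}\|$, which is positive by the assumption $a_n<1/(1+\kappa_1)$. Applying the same manoeuvre to $x_{n+1}-x^{*}=(1-b_n)(T_1(y_n)-T_1(x^{*}))+b_n(T_2(y_n)-T_2(x^{*}))$, using the lower constant $\kappa_1$ for the $T_1$ term and the upper constant $\alpha_2$ for the $T_2$ term, yields $\|x_{n+1}-x^{*}\|\ge[\kappa_1-(\kappa_1+\alpha_2)b_n]\|y_n-x^{*}\|$, again positive under $b_n<\kappa_1/(\kappa_1+\alpha_2)$. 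Telescoping these two estimates over $k=0,\dots,n$ produces $\|x_{n+1}-x^{*}\|\ge\mathcal{L}_n^{IG}\|x_0-x^{*}\|$. For optimality I would choose $\Omega=(-1/2,1/2)$, $x^{*}=0$, and take $T_1(\xi)=-\kappa_1\xi$, $T_2(\xi)=\alpha_2\xi$; both are self-maps of $\Omega$ fixing the origin, they lie in the required nonexpansivity classes, and a direct one-dimensional calculation yields $|x_{n+1}|=\mathcal{L}_n^{IG}\,|x_0|$ exactly.

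The asymptotic statements then follow from the explicit form of $\mathcal{L}_n^{IG}$. For (i), since each factor satisfies $\kappa_1-(\kappa_1+\alpha_2)b_k\le\kappa_1<1$, I immediately obtain $\mathcal{L}_n^{IG}\le\kappa_1^{\,n+1}\to 0$, independently of the choice of $(a_n),(b_n)$. For (ii), with $\kappa_1=1$ (which forces $\alpha_1=1$) the product becomes $\prod_{k=0}^{n}[1-2a_k][1-(1+\alpha_2)b_k]$; taking the logarithm and sandwiching via Lemma~\ref{lem1} bounds $\ln\mathcal{L}_n^{IG}$ between $-\sum 2a_k/(1-2a_k)-\sum(1+\alpha_2)b_k/(1-(1+\alpha_2)b_k)$ and $-\sum 2a_k-\sum(1+\alpha_2)b_k$. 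Lemma~\ref{lem2} shows that both endpoints converge or diverge jointly with $\sum a_k+\sum b_k$, delivering the equivalence $\mathcal{L}_n^{IG}\to 0\Longleftrightarrow\sum(a_n+b_n)=+\infty$.

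The hardest part, as in the upper-bound theorem, will be the optimality step: one must verify that the chosen $T_1,T_2$ render \emph{every} reverse triangle inequality simultaneously tight along the whole orbit, not just at a single step. This requires tracking the signs of $y_n$ and of $(1-b_n)T_1(y_n)+b_nT_2(y_n)$ at each iterate and exploiting~\eqref{need-IG} to rule out sign cancellations that would shrink $|x_{n+1}|$ below $\mathcal{L}_n^{IG}|x_0|$. The remaining computations parallel those of Theorem~\ref{theo-U-IG} almost verbatim once one systematically swaps the upper constants $\alpha_i$ for the lower constants $\kappa_i$ in the relevant places.
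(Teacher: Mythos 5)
Your proposal follows the paper's proof stage for stage: the same reverse-triangle pointwise estimates, a one-dimensional linear example for optimality, and the same Lemma~\ref{lem1}/Lemma~\ref{lem2} sandwich for the asymptotics in (i)--(ii). The one place where you genuinely depart from the paper is the optimality example, and there your version is in fact the better one. The paper takes $T_1(x)=\kappa_1 x$, $T_2(x)=-\alpha_2 x$, which gives $y_n=[1-(1-\kappa_1)a_n]x_n$ and hence realizes the product $\prod_{k=0}^{n}[1-(1-\kappa_1)a_k][\kappa_1-(\kappa_1+\alpha_2)b_k]$, not $\mathcal{L}_n^{IG}$: the sign in front of $\kappa_1 a_k$ comes out wrong, so the paper's example does not attain the claimed bound. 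Your choice $T_1(\xi)=-\kappa_1\xi$, $T_2(\xi)=\alpha_2\xi$ gives $x_{n+1}=-[\kappa_1-(\kappa_1+\alpha_2)b_n][1-(1+\kappa_1)a_n]x_n$, and since both bracketed factors are nonnegative under~\eqref{need-IG}, the moduli telescope to $|x_{n+1}|=\mathcal{L}_n^{IG}|x_0|$ exactly. Your closing worry about sign cancellations along the orbit is unfounded: both maps are linear, so norms multiply exactly at every step regardless of the sign of $x_n$; no orbit-tracking is needed.

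There is, however, a step that fails as stated, and you inherit it from the paper verbatim: the first pointwise estimate $\|y_n-x^*\|\ge[1-(1+\kappa_1)a_n]\|x_n-x^*\|$. The reverse triangle inequality gives $\|y_n-x^*\|\ge(1-a_n)\|x_n-x^*\|-a_n\|T_1(x_n)-T_1(x^*)\|$, and to continue one needs an \emph{upper} bound on $\|T_1(x_n)-T_1(x^*)\|$; membership in $\mathcal{N}_{\kappa_1,\alpha_1}(\Omega)$ only provides $\|T_1(x_n)-T_1(x^*)\|\le\alpha_1\|x_n-x^*\|$, which yields the factor $1-(1+\alpha_1)a_n$. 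Writing $\kappa_1$ there uses the lower Lipschitz constant as if it were an upper bound, which is legitimate only when $\kappa_1=\alpha_1$. When $\kappa_1<\alpha_1$ the claimed inequality is genuinely false: take $T_1(\xi)=-\alpha_1\xi$, which belongs to $\mathcal{N}_{\kappa_1,\alpha_1}(\Omega)$, and $a_n=1/(1+\alpha_1)$, which is permitted by~\eqref{need-IG}; then $y_n=0$, hence $x_{n+1}=x^*$, while $\mathcal{L}_n^{IG}>0$, so $\|x_{n+1}-x^*\|\ge\mathcal{L}_n^{IG}\|x_0-x^*\|$ cannot hold. Thus in the regime $\kappa_1<\alpha_1$ neither your argument nor the paper's establishes the lower-bound property (indeed $\mathcal{L}_n^{IG}$ is not a lower bound there); both arguments are valid only when $\kappa_1=\alpha_1$, or after replacing $1-(1+\kappa_1)a_k$ by $1-(1+\alpha_1)a_k$ in the estimate. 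Since the paper makes exactly the same move, this is not a deviation on your part, but you should not present that displayed inequality as a consequence of $(\kappa_1,\alpha_1)$-nonexpansivity.
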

		\begin{proof}
			Similarly, since $T_1 \in \text{Ne}_{\kappa_1, \alpha_1}(\Omega)$, $T_2 \in \text{Ne}_{\kappa_2, \alpha_2}(\Omega)$ and the parameter sequences $(a_n)$, $(b_n)$ satisfy the condition~\eqref{need-IG}, we can also have the following estimates
			\begin{align*}
				\| y_n - x^* \| &\ge (1 - a_n)\|x_n - x^*| - a_n\|T_1(x_n) - T_1(x^*)\| \\
				&\ge [1 - (1 + \kappa_1)a_n]\|x_n - x^*|, ~ n \in \mathbb{N},
			\end{align*}
			and
			\begin{align*} 
				\|x_{n+1}-x^*\| &\ge (1 - b_n)\|T_1(y_n) - T_1(x^*)\| - b_n\|T_2(y_n) - T_2(x^*)\| \\ &\ge [(1 - b_n)\kappa_1-b_n\alpha_2]\|y_n-x^*\|, \\
				&\ge [\kappa_1 - (\kappa_1 + \alpha_2)b_n][1 - (1 + \kappa_1)a_n]\|x_n - x^*|, ~ n \in \mathbb{N}.
			\end{align*}
			It leads to $\|x_{n+1}-x^*\| \ge \mathcal{L}_n^{IG}\|x_1-x^*\|$, for all $n \in \mathbb{N}$,
			where $\mathcal{L}_n^{IG}$ is defined by $\eqref{Ln-IG}$. 	On the other hand, we can easily check that if we choose two mappings $T_1,T_2$ defined by
			\begin{align*}
				T_1(x)= \kappa_1 x, \ T_2(x)=-\alpha_2 x, \quad x \in B\left(0, \dfrac{1}{2}\right),
			\end{align*}
			then we can observe that $x^*=0$ and 
			$$\|x_{n+1}-x^*\|=\mathcal{L}_n^{IG}\|x_0-x^*\|,$$
			for every $n \in \mathbb{N}$. Therefore, $\mathcal{L}_n^{IG}$ is the optimal lower bound of the iteration~\eqref{xn-IG}. \\
			
			In the first case $\kappa_1<1$, for all $k \in \mathbb{N}$, we have
			\begin{align*}
				[\kappa_1 - (\kappa_1 + \alpha_2)b_k][1 - (1 + \kappa_1)a_k] \le \kappa_1,~ \text{for all} ~k \in \mathbb{N}.
			\end{align*}
			It leads to $\mathcal{L}_n^{IG}\le \kappa_1^n$. Thus, we deduce that $\lim{\mathcal{L}_n^{IG}}=0$. \\
			
			For the second case $\kappa_1=1$, for every $n \in \mathbb N^*$, $\ln \mathcal{L}_n^{IG}$ can be rewritten as below
			\begin{align*}
				\ln{\mathcal{L}_n^{IG}} = \sum_{k=0}^{n} \ln[1 - (1 + \alpha_2)b_n] + \sum_{k=0}^{n} \ln(1 - 2a_n), ~ n \in \mathbb{N}.
			\end{align*}
By the inequality~\eqref{ln-ineq}, we have two following estimates
			\begin{align*}
				&\ln{\mathcal{L}_n^{IG}} \le -(1+\alpha_2)\sum_{k=0}^{n}{b_k}-2\sum_{k=0}^{n}{a_k},\\
				&\ln{\mathcal{L}_n^{IG}} \ge -(1+\alpha_2)\sum_{k=0}^{n} \dfrac{b_k}{1-(1+\alpha_2)b_k} - 2\sum_{k=0}^{n} \dfrac{a_k}{1-2a_k}.
			\end{align*}
Moreover, Lemma~\ref{lem2} gives us
			\begin{align*} 
				\sum_{k \in \mathbb{N}} \dfrac{b_k}{1-(1+\alpha_2)b_k} + \sum_{k \in \mathbb{N}} \dfrac{a_k}{1-2a_k}  \sim \sum_{k \in \mathbb{N}}a_k+ \sum_{k \in \mathbb{N}}b_k.
			\end{align*}
It allows us to conclude the second statement of this theorem. 
		\end{proof}
		\begin{corollary}\label{cor-IG}
			Under the assumptions of Theorem~\ref{theo-L-IG}, the sequence $(x_n^{IG})$ converges to $x^*$ if the following assumption holds true
			$$\displaystyle (1-\alpha_1)\sum_{k \in \mathbb{N}}(a_k+1-b_k)+(1-\alpha_2)\sum_{k \in \mathbb{N}}b_k=+\infty.$$
		\end{corollary}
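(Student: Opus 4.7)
The plan is to observe that the hypothesis of the corollary is precisely the condition \eqref{iff-Un-IG} appearing in Theorem~\ref{theo-U-IG}, simply rewritten by grouping the two terms involving $(1-\alpha_1)$. Indeed,
$$(1-\alpha_1)\sum_{k \in \mathbb{N}}(a_k+1-b_k) + (1-\alpha_2)\sum_{k\in\mathbb{N}}b_k = (1-\alpha_1)\sum_{k\in\mathbb{N}}a_k + (1-\alpha_1)\sum_{k\in\mathbb{N}}(1-b_k) + (1-\alpha_2)\sum_{k\in\mathbb{N}}b_k,$$
so the two assumptions coincide term by term.

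Thus I would first invoke Theorem~\ref{theo-U-IG} to conclude that $\mathcal{U}_n^{IG} \to 0$ as $n\to\infty$. Next, from the optimal upper error bound estimate established in the proof of Theorem~\ref{theo-U-IG} (specifically \eqref{est-Un}), we have
$$\|x_{n+1}-x^*\| \le \mathcal{U}_n^{IG}\,\|x_0-x^*\|, \qquad n\in\mathbb{N},$$
which by the squeeze principle forces $\|x_{n+1}-x^*\|\to 0$, i.e.\ $(x_n^{IG})$ converges to the common fixed point $x^*$.

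There is no real obstacle here: the corollary is essentially a direct packaging of Theorem~\ref{theo-U-IG}, and the assumptions of Theorem~\ref{theo-L-IG} on the parameter sequences $(a_n)$ and $(b_n)$ are not actually needed for the convergence statement itself — they only become relevant if one wishes to discuss the optimal lower bound or sharpness. The only subtle point worth making explicit in the write-up is the algebraic identity above, which shows the hypothesis matches \eqref{iff-Un-IG} exactly; once that is noted, convergence follows in one line from the upper bound.
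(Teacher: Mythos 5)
Your proposal is correct and is exactly the intended derivation: the paper states this corollary without a separate proof precisely because, after regrouping the sums (all terms being nonnegative), the hypothesis is condition~\eqref{iff-Un-IG}, so Theorem~\ref{theo-U-IG} together with the bound $\|x_{n+1}-x^*\|\le \mathcal{U}_n^{IG}\|x_0-x^*\|$ gives convergence at once. Your side remark that the assumptions~\eqref{need-IG} of Theorem~\ref{theo-L-IG} play no role in this upper-bound argument is also accurate.
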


\subsection{Generalized two-step iteration (G)}

Let us consider six constants $\alpha_1,\alpha_2,\beta_1,\beta_2, \kappa_1, \kappa_2 \in [0,1]$ such that $0 < \alpha_1 + \alpha_2 + \beta_1 + \beta_2 < 4$, $\kappa_1 \le \beta_1$, $\kappa_2 \le \beta_2$ and two parameter sequences $(a_n),(b_n) \subset [0, 1]$. For every four mappings $S_1 \in \text{Ne}_{\kappa_1, \beta_1}(\Omega)$, $S_2 \in \text{Ne}_{\kappa_2, \beta_2}(\Omega)$, $T_1 \in \text{Ne}_{\alpha_1}(\Omega)$, $T_2 \in \text{Ne}_{\alpha_2}(\Omega)$ with the common fixed point $x^* \in \mathcal{F}(S_1, S_2, T_1, T_2)$, the iterative sequence $(x_n)$ is defined by
\begin{align}\tag{G}\label{xn-G}
\begin{cases}x_0 \in \Omega,\\ y_{n} = (1-a_n)S_1(x_n) + a_nT_1(x_n), \\ x_{n+1} = (1-b_n)S_2(y_n) + b_nT_2(y_n), \ n \in \mathbb{N},\end{cases}	
\end{align} 
which is called the generalized iteration (G).

\begin{theorem}\label{theo-U-G}
The optimal upper bound of~\eqref{xn-G} is determined by
\begin{align}\label{Un-G}
\mathcal{U}_n^{G}=\prod_{k=0}^n {[\beta_1+(\alpha_1 - \beta_1)a_k][\beta_2+(\alpha_2-\beta_2)b_k]},
\end{align}
for all $n \in \mathbb{N}$. Moreover, the sequence $(\mathcal{U}_n^G)$ converges to 0 if and only if
\begin{align}\label{iff-Un-G}
(1-\alpha_1)\sum_{k \in \mathbb{N}} a_k & + (1-\alpha_2)\sum_{k \in \mathbb{N}} b_k \notag \\
& + (1-\beta_1) \sum_{k \in \mathbb{N}} (1 - a_k) + (1-\beta_2)\sum_{k \in \mathbb{N}} (1- b_k) = +\infty. 
\end{align}
\end{theorem}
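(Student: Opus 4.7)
The strategy mirrors the proof of Theorem~\ref{theo-U-IG}: first derive the contractive inequality giving $\mathcal{U}_n^G$, then exhibit linear mappings that attain it, and finally characterize the vanishing of $\mathcal{U}_n^G$ via Lemmas~\ref{lem1} and~\ref{lem2}.

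For the upper bound, I would combine the triangle inequality with the nonexpansive estimates on each of the four mappings, using $S_i(x^*) = T_i(x^*) = x^*$, to obtain
\begin{align*}
\|y_n - x^*\| \le (1-a_n)\beta_1 \|x_n - x^*\| + a_n \alpha_1 \|x_n - x^*\| = [\beta_1 + (\alpha_1 - \beta_1)a_n]\|x_n - x^*\|,
\end{align*}
and the analogous $\|x_{n+1} - x^*\| \le [\beta_2 + (\alpha_2 - \beta_2)b_n]\|y_n - x^*\|$. Composing and iterating yields $\|x_{n+1} - x^*\| \le \mathcal{U}_n^G\|x_0 - x^*\|$. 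For optimality, I would take $\Omega = (0, 1/2)$ with the linear mappings $S_i(\xi) = \beta_i \xi$ and $T_i(\xi) = \alpha_i \xi$; these lie in the prescribed classes with common fixed point $x^* = 0$, and every triangle inequality above collapses to an equality, giving $\|x_{n+1}-x^*\| = \mathcal{U}_n^G\|x_0-x^*\|$.

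For the iff statement, I would set $u_k = \beta_1 + (\alpha_1 - \beta_1)a_k$ and $v_k = \beta_2 + (\alpha_2 - \beta_2)b_k$, and exploit the algebraic decomposition
\begin{align*}
1 - u_k = (1-\beta_1)(1-a_k) + (1-\alpha_1)a_k,
\end{align*}
together with the analogous formula for $1 - v_k$. Applying Lemma~\ref{lem1} to $\ln u_k$ and $\ln v_k$ and summing yields the two-sided estimate
\begin{align*}
-\sum_{k=0}^n \frac{1-u_k}{u_k} - \sum_{k=0}^n \frac{1-v_k}{v_k} \le \ln \mathcal{U}_n^G \le -\sum_{k=0}^n (1-u_k) - \sum_{k=0}^n (1-v_k),
\end{align*}
whose right-hand side expands precisely to minus the partial sum in~\eqref{iff-Un-G}. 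Divergence of that series therefore forces $\mathcal{U}_n^G \to 0$. For the converse, I would invoke Lemma~\ref{lem2} on each of the four sub-series obtained after splitting $\frac{1-u_k}{u_k}$ and $\frac{1-v_k}{v_k}$ (of the form $\sum a_k/(1-(1-\alpha_1)a_k) \sim \sum a_k$, and so on), showing that the lower bound diverges in tandem with the upper one.

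The main delicacy will be in the last step: correctly partitioning $\sum (1-u_k)/u_k$ and $\sum (1-v_k)/v_k$ into four comparable sub-series aligned with the four terms of~\eqref{iff-Un-G}, and confirming that Lemma~\ref{lem2} applies in each case by verifying that the relevant denominators remain bounded away from zero under the parameter constraints $\alpha_i, \beta_i \in [0,1]$ and $0 < \alpha_1+\alpha_2+\beta_1+\beta_2 < 4$.
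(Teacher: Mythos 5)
Your proposal follows essentially the same route as the paper's proof: the same termwise contractive estimates composed to give $\mathcal{U}_n^G$, the same linear mappings $S_i(\xi)=\beta_i\xi$, $T_i(\xi)=\alpha_i\xi$ on a ball around $0$ to attain equality, and the same two-sided logarithmic estimate via Lemma~\ref{lem1} combined with the equiconvergence Lemma~\ref{lem2}; the decomposition $1-u_k=(1-\beta_1)(1-a_k)+(1-\alpha_1)a_k$ is exactly the identity the paper uses to expand its upper sum into the four terms of~\eqref{iff-Un-G}. The delicacy you flag about the denominators $u_k,v_k$ staying away from zero is real but is likewise left implicit in the paper, so the proposal is correct and matches the published argument.
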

\begin{proof}
By the similar technique used for the~\eqref{xn-IG} iteration, we also obtain that
\begin{align*}
\|y_n-x^*\| \le [(\beta_1 + (\alpha_1 - \beta_1)a_n]\|x_n - x^*\|,
\end{align*}
and
\begin{align*}
\|x_{n+1}-x^*\| \le [\beta_2 + (\alpha_2 - \beta_2)b_n][(\alpha_1 + (\beta_1 - \alpha_1)a_n]\|x_n - x^*\|,
\end{align*}
for all $n \in \mathbb{N}$. It yields that $\|x_{n+1}-x^*\| \le \mathcal{U}_n^{G}\|x_0-x^*\|$ for all $n \in \mathbb{N}$, where $\mathcal{U}_n^{G}$ is defined by~\eqref{Un-G}. Moreover, if we choose $\Omega=B\left(0,\dfrac{1}{2}\right)$ and four mappings $S_1,S_2,T_1,T_2$ defined by
\begin{align*}
S_1(x)=\beta_1 x, \ S_2(x)=\beta_2x, \ T_1(x)=\alpha_1x, \ T_2(x)=\alpha_2x, \quad x \in \Omega,
\end{align*}
then we have $x^*=0$ and $\|x_{n+1}-x^*\|=\mathcal{U}_n^G\|x_0-x^*\|$, for every $n \in \mathbb{N}$. Therefore, $\mathcal{U}_n^G$ is the optimal upper bound of (G). \\

Next, we prove that the sequence $(\mathcal{U}_n^G)$ converges to $0$ if and only if~\eqref{iff-Un-G} is satisfied. By the inequality introduced at~\eqref{ln-ineq}, we have
\begin{align*}
\ln \mathcal{U}_n^G \ge \sum_{k=0}^{n} \dfrac{\beta_1 - 1 + (\alpha_1 - \beta_1)a_k}{\beta_1 + (\alpha_1 - \beta_1)a_k} + \sum_{k=0}^{n} \dfrac{\beta_2 - 1 + (\alpha_2 - \beta_2)b_k}{\beta_2 + (\alpha_2 - \beta_2)b_k}, 
\end{align*}
and
\begin{align*}
\ln \mathcal{U}_n^G &\le \sum_{k=0}^{n} [\beta_1 - 1 + (\alpha_1 - \beta_1)a_k] + \sum_{k=0}^{n} [\beta_2 - 1 + (\alpha_2 - \beta_2)b_k] \\
&= (\alpha_1 - 1)\sum_{k=0}^{n} a_k + (\alpha_2 - 1)\sum_{k=0}^{n} b_k \\
& \qquad \qquad + (\beta_1 - 1) \sum_{k=0}^{n} (1 - a_k) + (\beta_2 - 1)\sum_{k=0}^{n} (1 - b_k).
\end{align*}
On the other hand, by Lemma~\ref{lem2}, we have
\begin{align*}
\sum_{k \in \mathbb{N}} \dfrac{\beta_1 - 1 + (\alpha_1 - \beta_1)a_k}{\beta_1 + (\alpha_1 - \beta_1)a_k} &\sim  \sum_{k \in \mathbb{N}} [\beta_1 - 1 + (\alpha_1 - \beta_1)a_k] \\
&\sim (\alpha_1 - 1) \sum_{k \in \mathbb{N}} a_k + (\beta_1 - 1) \sum_{k \in \mathbb{N}} (1- a_k),
\end{align*}
and 
\begin{align*}
\sum_{k \in \mathbb{N}} \dfrac{\beta_2 - 1 + (\alpha_2 - \beta_2)b_k}{\beta_2 + (\alpha_2 - \beta_2)b_k} &\sim  \sum_{k \in \mathbb{N}} [\beta_2 - 1 + (\alpha_2 - \beta_2)b_k] \\
&\sim  (\alpha_2 - 1) \sum_{k \in \mathbb{N}} b_k + (\beta_2 - 1) \sum_{k \in \mathbb{N}} (1- b_k).
\end{align*}
From the above statements, we conclude that~\eqref{iff-Un-G} is the necessary and sufficient condition for the convergence of $(\mathcal{U}_n^G)$. 
\end{proof}

\begin{theorem}\label{theo-L-G}
Suppose further that 
\begin{align}\label{need-G}
a_n \le \dfrac{\kappa_1}{\kappa_1 + \alpha_1}, \quad b_n \le \dfrac{\kappa_2}{\kappa_2 + \alpha_2}, \quad n \in \mathbb{N}.
\end{align}
Then the optimal lower bound of~\eqref{xn-G} is determined by 
\begin{align}\label{Ln-G}
\mathcal{L}_n^{G}=\prod_{k=0}^n {[\kappa_1-(\kappa_1+\alpha_1)a_k][\kappa_2-(\kappa_2+\alpha_2)b_k]},
\end{align}
for all $n \in \mathbb{N}$. Moreover,
\begin{enumerate}
\item [(i)] If $\kappa_1 < 1$ or $\kappa_2 < 1$ then $\displaystyle \lim\limits_{n\to \infty} \mathcal{L}_n^G = 0$ for every $(a_n), (b_n)$ satisfy~\eqref{need-G}.
\item [(ii)] If $\kappa_1 = \kappa_2 = 1$ then we have $\displaystyle \lim\limits_{n\to \infty} \mathcal{L}_n^G = 0 \Longleftrightarrow \sum_{n \in \mathbb{N}} (a_n + b_n) = +\infty$.
\end{enumerate}
\end{theorem}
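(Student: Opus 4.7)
The plan is to mirror the structure of the proofs of Theorem~\ref{theo-L-IG} and Theorem~\ref{theo-U-G}, but this time exploiting the lower Lipschitz constants $\kappa_1,\kappa_2$ of $S_1,S_2$ together with the upper Lipschitz constants $\alpha_1,\alpha_2$ of $T_1,T_2$. First I would use the reverse triangle inequality to estimate
\[
\|y_n-x^*\|\ge (1-a_n)\|S_1(x_n)-S_1(x^*)\|-a_n\|T_1(x_n)-T_1(x^*)\|\ge [\kappa_1-(\kappa_1+\alpha_1)a_n]\|x_n-x^*\|,
\]
where the hypothesis $a_n\le \kappa_1/(\kappa_1+\alpha_1)$ guarantees the bracket is non-negative so that the inequality is meaningful. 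Applying the same argument to the second step with $S_2,T_2$ yields $\|x_{n+1}-x^*\|\ge[\kappa_2-(\kappa_2+\alpha_2)b_n]\|y_n-x^*\|$, and iterating from $k=0$ to $n$ delivers the lower bound $\|x_{n+1}-x^*\|\ge \mathcal{L}_n^G\|x_0-x^*\|$ with $\mathcal{L}_n^G$ as defined in~\eqref{Ln-G}.

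To prove optimality I would construct an explicit example on $\Omega=B(0,1/2)$, taking $S_1(x)=\kappa_1 x$, $S_2(x)=\kappa_2 x$, $T_1(x)=-\alpha_1 x$, $T_2(x)=-\alpha_2 x$. A quick check shows $S_i\in \text{Ne}_{\kappa_i,\beta_i}(\Omega)$ (since $\kappa_i\le\beta_i$) and $T_i\in \text{Ne}_{\alpha_i}(\Omega)$, with common fixed point $x^*=0$. For this choice the iteration reduces to $x_{n+1}=[\kappa_2-(\kappa_2+\alpha_2)b_n][\kappa_1-(\kappa_1+\alpha_1)a_n]\,x_n$, realizing equality throughout and certifying that $\mathcal{L}_n^G$ is optimal.

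For statement (i), I observe that each factor satisfies $\kappa_i-(\kappa_i+\alpha_i)t\le \kappa_i$ for $t\in[0,1]$, so
\[
\mathcal{L}_n^G\le \kappa_1^{n+1}\kappa_2^{n+1},
\]
which forces $\mathcal{L}_n^G\to 0$ whenever $\min\{\kappa_1,\kappa_2\}<1$, with no condition on $(a_n),(b_n)$ beyond~\eqref{need-G}. For statement (ii), when $\kappa_1=\kappa_2=1$ the bound becomes $\mathcal{L}_n^G=\prod_{k=0}^{n}[1-(1+\alpha_1)a_k][1-(1+\alpha_2)b_k]$, and taking logarithms and applying Lemma~\ref{lem1} to each factor gives simultaneously
\[
-(1+\alpha_1)\sum_{k=0}^{n}\frac{a_k}{1-(1+\alpha_1)a_k}-(1+\alpha_2)\sum_{k=0}^{n}\frac{b_k}{1-(1+\alpha_2)b_k}\le \ln\mathcal{L}_n^G\le -(1+\alpha_1)\sum_{k=0}^{n}a_k-(1+\alpha_2)\sum_{k=0}^{n}b_k.
\]
Lemma~\ref{lem2} then collapses the left-hand sums to $\sum a_k$ and $\sum b_k$ (up to equivalence $\sim$), so both sides diverge to $-\infty$ simultaneously precisely when $\sum(a_n+b_n)=+\infty$.

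The main obstacle I anticipate is keeping the direction of the reverse triangle inequality consistent and verifying that the sign condition~\eqref{need-G} really suffices to keep every factor in the product non-negative (so that taking $\ln$ is legitimate and the bound is not vacuous); the sign-flipping example $T_i(x)=-\alpha_i x$ must also be checked to honour all four mapping class memberships, which is where the bookkeeping gets delicate. After that, the convergence dichotomy is a direct application of Lemmas~\ref{lem1} and~\ref{lem2}, exactly as in the proof of Theorem~\ref{theo-L-IG}.
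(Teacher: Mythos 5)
Your proposal is correct and follows essentially the same route as the paper: the reverse triangle inequality gives the factorwise lower bound, an explicit linear example certifies optimality, and the dichotomy (i)/(ii) follows from the geometric bound $\mathcal{L}_n^G\le(\min\{\kappa_1,\kappa_2\})^{n+1}$ respectively from Lemmas~\ref{lem1} and~\ref{lem2}. One point in your favour: your extremal choice $T_1(x)=-\alpha_1x$, $T_2(x)=-\alpha_2x$ (with $S_i(x)=\kappa_i x$) is the one that actually realizes equality $\|x_{n+1}-x^*\|=\mathcal{L}_n^G\|x_0-x^*\|$, since it produces the factors $\kappa_i-(\kappa_i+\alpha_i)t$; the paper's displayed choice $T_i(x)=\alpha_i x$ would instead reproduce the upper-bound factors $\kappa_i+(\alpha_i-\kappa_i)t$, so your sign flip (consistent with the paper's own example for Theorem~\ref{theo-L-IG}, where $T_2(x)=-\alpha_2x$) is the correct bookkeeping. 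The only residual caveat, shared with the paper, is that when $a_n$ or $b_n$ attains the endpoint in~\eqref{need-G} a factor vanishes and the logarithmic argument in (ii) needs the strict inequality to invoke Lemma~\ref{lem2}.
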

\begin{proof}
By the similar technique presented in Theorem~\ref{theo-L-IG}, we can obtain the estimate for the sequence $(x_n^G)$ as below
\begin{align*} 
\|x_{n+1}-x^*\| \ge [\kappa_1-(\kappa_1+\alpha_1)a_n][\kappa_2-(\kappa_2+\alpha_2)b_n]\|x_n-x^*\|,
\end{align*}
for all $n \in \mathbb{N}$. It leads to $\|x_{n+1}-x^*\| \ge \mathcal{L}_n^{G}\|x_0-x^*\|$, for all $n \in \mathbb{N}$,
where $\mathcal{L}_n^{G}$ is defined by $\eqref{Ln-G}$. By choosing four mappings $S_1,S_2,T_1,T_2$ defined by
\begin{align*}
S_1(x)=\kappa_1 x,  ~ S_2(x)=\kappa_2 x, ~ T_1(x)= \alpha_1 x, ~ T_2(x)=\alpha_2 x, ~ x \in B\left(0, \dfrac{1}{2}\right),
\end{align*}
we can observe that $x^*=0$ and $\|x_{n+1}-x^*\|=\mathcal{L}_n^{G}\|x_0-x^*\|$ for every $n \in \mathbb{N}$. Therefore, $\mathcal{L}_n^G$ is the optimal lower bound of the iteration~\eqref{xn-G}.\\

Let us consider the first case $\kappa_1<1$ or $\kappa_2<1$. For all $k \in \mathbb{N}$, we have
\begin{align*}
[\kappa_1-(\kappa_1+\alpha_1)a_k][\kappa_2-(\kappa_2+\alpha_2)b_k]\le \kappa_1  \kappa_2 \le \min\{\kappa_1,\kappa_2\},~ \text{for all} ~k \in \mathbb{N}.
\end{align*}
It leads to $\mathcal{L}_n^G\le (\min\{\kappa_1,\kappa_2\})^n$. Combining with the fact that $\min\{\kappa_1,\kappa_2\}<1$, we deduce that $\lim{\mathcal{L}_n^G}=0$.\\

Otherwise, if $\kappa_1=\kappa_2=1$, $\ln \mathcal{L}_n^G$ can be rewritten as below
\begin{align*}
\ln{\mathcal{L}_n^G} = \sum_{k=0}^{n}{\ln(1-(1+\alpha_1)a_k)}+\sum_{k=0}^{n}{\ln(1-(1+\alpha_2)b_k)},
\end{align*}
for every $n \in \mathbb{N}$. By the inequality~\eqref{ln-ineq}, we have two following estimates
\begin{align*}
&\ln{\mathcal{L}_n^G} \le -(1+\alpha_1)\sum_{k=0}^{n}{a_k}-(1+\alpha_2)\sum_{k=0}^{n}{b_k},\\
&\ln{\mathcal{L}_n^G} \ge -(1+\alpha_1)\sum_{k=0}^{n}{\dfrac{a_k}{1-(1+\alpha_1)a_k}}-(1+\alpha_2)\sum_{k=0}^{n}{\dfrac{b_k}{1-(1+\alpha_2)b_k}}.
\end{align*}
Applying Lemma~\ref{lem2} again, there holds
\begin{align*} 
\sum_{k \in \mathbb{N}} {\dfrac{a_k}{1-(1+\alpha_1)a_k}} + \sum_{k \in \mathbb{N}} {\dfrac{b_k}{1-(1+\alpha_2)b_k}} \sim \sum_{k \in \mathbb{N}}a_k+ \sum_{k \in \mathbb{N}}b_k,
\end{align*}
which allows us to conclude the last statement of this theorem. 
\end{proof}

\begin{corollary}\label{cor-G}
Under the assumptions of Theorem \ref{theo-L-G}, the sequence $(x_n^G)$ converges to $x^*$ if one of two following statements holds true:
\begin{itemize}
\item [(i)] $\min\{\kappa_1,\kappa_2\} <1$.
\item [(ii)] $\kappa_1=\kappa_2=1,$ and $\displaystyle \sum_{n \in\mathbb{N}}{(a_n+b_n)}=+\infty.$
\end{itemize}
Moreover, if $\alpha_1,\alpha_2 \in (0,1)$ and the conditions in $(ii)$ are satisfied, we have the following statement
\begin{align} \label{dl-1.7}
\lim_{n \to \infty}{x_n}=x^* \Longleftrightarrow \sum\limits_{i\in \mathbb N}{(a_i+b_i)}=+\infty.
\end{align}
\end{corollary}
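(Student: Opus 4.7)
The plan is to derive the corollary from Theorems~\ref{theo-U-G} and~\ref{theo-L-G}. Since the sandwich
\[
\mathcal{L}_n^G\|x_0-x^*\|\;\le\;\|x_{n+1}-x^*\|\;\le\;\mathcal{U}_n^G\|x_0-x^*\|
\]
holds for every admissible tuple $(S_1,S_2,T_1,T_2)$, sufficiency for $x_n\to x^*$ reduces to $\mathcal{U}_n^G\to 0$, while, because the OLEB is \emph{attained} on an explicit family of scalar maps, the convergence of $x_n$ across the admissible class forces $\mathcal{L}_n^G\to 0$.

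For sufficiency under~(ii), $\kappa_1=\kappa_2=1$ together with $\kappa_i\le\beta_i\le 1$ forces $\beta_1=\beta_2=1$, so the characterization~\eqref{iff-Un-G} collapses to
\[
(1-\alpha_1)\sum_{k\in\mathbb{N}} a_k+(1-\alpha_2)\sum_{k\in\mathbb{N}} b_k=+\infty,
\]
which follows from the hypothesis $\sum_{k\in\mathbb{N}}(a_k+b_k)=+\infty$ combined with the standing constraint $\alpha_1+\alpha_2<2$. For sufficiency under~(i), WLOG $\kappa_1<1$; the hypothesis~\eqref{need-G} gives $a_n\le\kappa_1/(\kappa_1+\alpha_1)$, so $1-a_n$ is uniformly bounded away from $0$ and $\sum_{k\in\mathbb{N}}(1-a_k)=+\infty$, and symmetrically for $b_n$. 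The standing constraint $\alpha_1+\alpha_2+\beta_1+\beta_2<4$ ensures that at least one positive coefficient $(1-\alpha_i)$ or $(1-\beta_i)$ then multiplies a divergent series in~\eqref{iff-Un-G}, giving $\mathcal{U}_n^G\to 0$.

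For the \emph{moreover} equivalence, the $(\Leftarrow)$ direction is exactly case~(ii). For $(\Rightarrow)$ I would specialize to the extremal mappings $S_1(x)=S_2(x)=x$ (permissible since $\kappa_1=\kappa_2=\beta_1=\beta_2=1$) and $T_1(x)=-\alpha_1 x$, $T_2(x)=-\alpha_2 x$ on $\Omega=B\!\left(0,\tfrac{1}{2}\right)$: a direct expansion of~\eqref{xn-G} yields $\|x_{n+1}\|=\mathcal{L}_n^G\|x_0\|$, so $x_n\to 0$ forces $\mathcal{L}_n^G\to 0$, and Theorem~\ref{theo-L-G}(ii) then delivers $\sum_{n\in\mathbb{N}}(a_n+b_n)=+\infty$. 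The main obstacle I anticipate is the bookkeeping in case~(i): since $\min\{\kappa_1,\kappa_2\}<1$ does not by itself rule out $\beta_1=\beta_2=1$, the argument must split on which of $\alpha_i,\beta_i$ is strictly less than $1$ in order to extract a genuinely divergent term from~\eqref{iff-Un-G}.
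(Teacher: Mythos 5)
Your overall architecture is the right one and, for the \emph{moreover} equivalence, it essentially matches the paper: sufficiency must come from $\mathcal{U}_n^G\to 0$ via Theorem~\ref{theo-U-G}, and necessity from the fact that $\mathcal{L}_n^G$ is attained, so that $x_n\to x^*$ forces $\mathcal{L}_n^G\to 0$ and Theorem~\ref{theo-L-G}(ii) applies. Your extremal choice $S_1=S_2=\mathrm{id}$, $T_i(x)=-\alpha_i x$ is indeed the one that realizes $\mathcal{L}_n^G$ (the maps $T_i(x)=\alpha_i x$ written in the paper's proof of Theorem~\ref{theo-L-G} realize a different product and are presumably a sign typo), and under the extra hypothesis $\alpha_1,\alpha_2\in(0,1)$, with $\beta_1=\beta_2=\kappa_1=\kappa_2=1$, condition~\eqref{iff-Un-G} really is equivalent to $\sum_{k}(a_k+b_k)=+\infty$. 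That part of your argument is complete.

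The gap you flag in case~(i) is genuine, and the same gap is present, unflagged, in your case~(ii). There you assert that $(1-\alpha_1)\sum_k a_k+(1-\alpha_2)\sum_k b_k=+\infty$ follows from $\sum_k(a_k+b_k)=+\infty$ together with $\alpha_1+\alpha_2<2$; it does not, because the latter only guarantees that \emph{one} of the coefficients $1-\alpha_i$ is positive, while the divergence of $\sum_k(a_k+b_k)$ may be carried entirely by the other sequence. Concretely, take $\kappa_1=\kappa_2=\beta_1=\beta_2=\alpha_2=1$, $\alpha_1=\tfrac12$, $a_n\equiv 0$, $b_n\equiv\tfrac12$, $S_1=S_2=T_2=\mathrm{id}$, $T_1(x)=\tfrac12 x$: all hypotheses of~(ii) and of Theorem~\ref{theo-L-G} hold, yet $x_{n+1}=x_n$ for every $n$, so $(x_n^G)$ does not converge to $x^*$. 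An analogous example ($\kappa_1=\tfrac12$, $\kappa_2=\beta_1=\beta_2=\alpha_2=1$, $\sum_k a_k<\infty$) defeats~(i). So neither~(i) nor~(ii) can be derived from Theorem~\ref{theo-U-G}, and no bookkeeping over which of $\alpha_i,\beta_i$ is below $1$ will close the gap: the sufficiency claims are false as written. Be aware that the paper's own proof does not close it either --- it shows $\mathcal{L}_n^G\le(\min\{\kappa_1,\kappa_2\})^n\to 0$ in case~(i) (and invokes Theorem~\ref{theo-L-G}(ii) in case~(ii)) and then concludes convergence of $(x_n^G)$ from the vanishing of the \emph{lower} bound, which is a non sequitur, since $\mathcal{L}_n^G\to 0$ is necessary, not sufficient, for convergence. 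Your instinct to route everything through the upper bound was correct; the honest conclusion is that only the \emph{moreover} part of the corollary is actually provable.
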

\begin{proof}
Assuming that condition in $(i)$ holds, it allows us to arrive 
\begin{align*}
[\kappa_1-(\kappa_1+\alpha_1)a_n][\kappa_2-(\kappa_2+\alpha_2)b_n] \le  \kappa_1  \kappa_2 \le \min\{\kappa_1,\kappa_2\}.
\end{align*}
As a consequence, $\mathcal{L}_n^G \le (\min\{\kappa_1,\kappa_2\})^n$, for all $n \in \mathbb{N}$. Since $\min\{\kappa_1,\kappa_2\} < 1$, one gets that $\lim {\mathcal{L}_n^G}=0$. For this reason, we conclude that the sequence $(x_n^G)$ converges to $x^*$.\\

Finally, if the group of assumptions $(ii)$ is satisfied, according to the proof of Theorem~\ref{theo-L-G}, we can easily come to the same conclusion. Furthermore, if $\alpha_1,\alpha_2 \in (0,1)$ and the conditions in $(ii)$ are satisfied, by combining Theorem~\ref{theo-U-G} and Theorem~\ref{theo-L-G}, we obtain that
\begin{align*}
\sum_{n \in\mathbb{N}}{(a_n+b_n)} = +\infty \Longleftrightarrow \lim{\mathcal{U}_n^G}=\lim{\mathcal{L}_n^G}=0,
\end{align*}
which confirms that the statement~\eqref{dl-1.7} is proved.
\end{proof}

\section{Comparison results}\label{Sec4}

In this section, we apply the optimal error bounds to compare the convergence rate of two iterative processes. Let us consider two mappings $\mathcal{P}, \mathcal{Q}: \Omega \to \Omega$ such that $\mathcal{P}(x^*)=\mathcal{Q}(x^*)=x^*$. With an initial value $x_0 = u_0 \in \Omega$, we define two iterative sequences
\begin{align}\notag
x_{n+1} = \mathcal{P}(x_n), \ \mbox{ and } \ 
u_{n+1} = \mathcal{Q}(u_n), \quad n \in \mathbb{N}.
\end{align}
We further assume that both sequences $(x_n)_{n \in \mathbb{N}}$ and  $(u_n)_{n \in \mathbb{N}}$ converge to $x^*$ in $\mathbb{X}$ as $n$ tends to infinity. It is well-known that the convergence rate of the iterative process associated to $\mathcal{P}$ can be estimated by the following ratio 
$$I_n(\mathcal{P}) := \frac{\|x_n-x^*\|}{\|x_0-x^*\|}, \quad n \in \mathbb{N}.$$ 
In order to compare the convergence speed of two iterative processes, we simply compare two convergence rates. More precisely, we define a new ratio
\begin{align}\label{ratio-R}
\mathcal{R}(x_n,u_n,x^*) := \frac{I_n(\mathcal{P})}{I_n(\mathcal{Q})} = \frac{\|x_n-x^*\|}{\|u_n-x^*\|}, \quad n \in \mathbb{N}.
\end{align}
For the validation of $\mathcal{R}$ in some singular cases, we set $\mathcal{R}(x_n,u_n,x^*) = 0$ if $x_n = u_n = x^*$ and $\mathcal{R}(x_n,u_n,x^*) = 1$ if $u_n = x^*$ but $x_n \neq x^*$.

\begin{definition}[Comparison of convergence rates]
\label{def-conv-rate}
We say that the iterative process associated to $\mathcal{P}$ converges faster the iterative process associated to $\mathcal{Q}$ if and only if
\begin{align}\notag 
\lim\limits_{n \to \infty} \mathcal{R}(x_n,u_n,x^*) = 0.
\end{align}
For simplicity, we say that $x_n$ converges to $x^*$ faster than $u_n$.
\end{definition}

\subsection{Comparison between (IG) and (I), (IM)}

By comparing the optimal upper bound of the iteration~\eqref{xn-IG} with the optimal lower bound of the iterations~\eqref{xn-IM} and~\eqref{xn-I}, we can propose a sufficient condition on the parameter sequences $(a_n)$ and $(b_n)$ so that the iteration~\eqref{xn-IG} converges faster than the iterations~\eqref{xn-I} and~\eqref{xn-IM}. 
\begin{theorem}\label{theo-IG-and-I}
Given two constants $\alpha_1,\alpha_2 \in [0,1]$ such that $0<\alpha_1+\alpha_2<2$ and two parameter sequence $(a_n)$ and $(b_n)$ in $[0,1]$. Suppose that there exists a constant $\delta>0$ such that
\begin{align}\label{cond:I-IG-1}
	1-b_k-\alpha_2b_k(1-a_k+\alpha_1a_k)\ge\delta, \ \text{ for all} \ k \in \mathbb{N},
\end{align}
and
\begin{align}\label{cond:I-IG-2}
	\displaystyle\sum_{k \in \mathbb{N}}{\left[(1-\alpha_1)(a_k+1)+\left(\alpha_1-\alpha_2-\displaystyle\dfrac{1+\alpha_2}{\delta}\right)b_k\right]}=+\infty.
\end{align}
Then the iteration~\eqref{xn-IG} converges faster than the iteration~\eqref{xn-I} if one of the following two groups of hypotheses holds true
\begin{itemize}
\item[i)] $\alpha_1 \in [0,1]$, $\alpha_2 \in [0,1)$ and $\displaystyle\sum_{k \in \mathbb{N}}{b_k}=+\infty$.
\item[ii)] $\alpha_1 \in [0,1)$, $\alpha_2=1$, $\displaystyle\sum_{k \in \mathbb{N}}(a_k-b_k+1)=+\infty$ and $\displaystyle\sum_{k \in \mathbb{N}}a_kb_k=+\infty$.
\end{itemize}
\end{theorem}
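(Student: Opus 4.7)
The plan is to apply Definition~\ref{def-conv-rate} by upper-bounding $\|x_n^{IG} - x^*\|$ via the optimal upper bound from Theorem~\ref{theo-U-IG} and lower-bounding $\|x_n^{I} - x^*\|$ via a direct iterative estimate. The upper bound is immediate: $\|x_{n+1}^{IG} - x^*\| \le \mathcal{U}_n^{IG}\|x_0 - x^*\|$. For the lower bound on~\eqref{xn-I}, I would start from $x_{n+1}^{I} = (1-b_n)x_n + b_n T_2(y_n)$, apply the reverse triangle inequality at $x^*$, and insert the forward bound $\|y_n - x^*\| \le [1-(1-\alpha_1)a_n]\|x_n - x^*\|$ coming from the $\alpha_1$-nonexpansiveness of $T_1$. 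This produces the one-step factor $1 - b_n - \alpha_2 b_n(1-a_n+\alpha_1 a_n)$, which by hypothesis~\eqref{cond:I-IG-1} is bounded below by $\delta > 0$. Iterating yields $\|x_{n+1}^{I} - x^*\| \ge M_n^{I}\|x_0 - x^*\|$, where $M_n^{I}$ denotes the product of these one-step factors over $k = 0, \ldots, n$; in particular each $M_n^I$ is strictly positive, so the ratio~\eqref{ratio-R} is well-defined.

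The ratio is then controlled by $\mathcal{U}_{n-1}^{IG}/M_{n-1}^{I}$. I would pass to logarithms and invoke Lemma~\ref{lem1}. The inequality $\ln(1+x) \le x$ applied factorwise to $\mathcal{U}_n^{IG}$ gives an upper bound of the shape $-(1-\alpha_1)\sum_{k=0}^{n}(a_k+1) + (\alpha_2-\alpha_1)\sum_{k=0}^{n} b_k$. For $\ln M_n^{I}$, writing each factor as $1 - d_k$ with $d_k \le b_k(1+\alpha_2)$ and $1-d_k \ge \delta$, the lower inequality $\ln(1-d_k) \ge -d_k/(1-d_k)$ together with these bounds delivers $\ln M_n^{I} \ge -[(1+\alpha_2)/\delta]\sum_{k=0}^{n} b_k$. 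Subtracting, the coefficient of $b_k$ collapses precisely into $\alpha_1 - \alpha_2 - (1+\alpha_2)/\delta$, so hypothesis~\eqref{cond:I-IG-2} forces $\ln(\mathcal{U}_n^{IG}/M_n^{I}) \to -\infty$, from which $\mathcal{R}(x_n^{IG}, x_n^I, x^*) \to 0$.

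Definition~\ref{def-conv-rate} also requires both $(x_n^{IG})$ and $(x_n^{I})$ to converge to $x^*$, which is where the two cases~(i) and~(ii) intervene. Convergence of~\eqref{xn-IG} follows directly from Theorem~\ref{theo-U-IG}: case~(i) gives $(1-\alpha_2)\sum b_k = +\infty$, while case~(ii) gives $(1-\alpha_1)\sum(a_k+1-b_k) = +\infty$. Convergence of~\eqref{xn-I} follows from the analogous forward estimate, whose one-step factor takes the form $1 - b_k(1-\alpha_2) - \alpha_2(1-\alpha_1)a_k b_k$; this product tends to $0$ in case~(i) via $\sum b_k = +\infty$, and in case~(ii) via $\sum a_k b_k = +\infty$. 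The principal technical hurdle is arranging the two logarithmic estimates so that the coefficient pattern in~\eqref{cond:I-IG-2} is matched exactly: the crude bound $d_k \le b_k(1+\alpha_2)$ must be loose enough to keep the algebra transparent but tight enough for the stated hypothesis to suffice. The remaining work is routine bookkeeping and checking positivity of all one-step factors so that Lemma~\ref{lem1} can be applied termwise.
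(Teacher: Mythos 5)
Your proposal is correct and follows essentially the same route as the paper: bound $\mathcal{R}(x_n^{IG},x_n^{I},x^*)$ by the ratio $\mathcal{U}_n^{IG}/\mathcal{L}_n^{I}$, pass to logarithms, apply Lemma~\ref{lem1} termwise together with $1-d_k\ge\delta$ and $d_k\le(1+\alpha_2)b_k$ to match the coefficient pattern of~\eqref{cond:I-IG-2}, and use cases~(i)/(ii) to secure convergence of both sequences. The only cosmetic difference is that you re-derive the one-step lower bound for~\eqref{xn-I} and its convergence directly, where the paper cites its companion results.
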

\begin{proof}
Let us assume that the iterations~\eqref{xn-IG} and~\eqref{xn-I} determine two sequences $x_n^{IG}$ and $x_n^{I}$ respectively. Since i) or ii) hold, Corollary~\ref{cor-IG} and \cite[Theorem 4.1]{paper2} ensure that $x_n^{IG}$ and $x_n^{I}$ converge to $x^*$. Combining with~\eqref{cond:I-IG-1}, it ensures that the sequences $\left(\mathcal{U}^{IG}_n\right)$ and $\left(\mathcal{L}^I_n\right)$ are the optimal upper bound of the iteration~\eqref{xn-IG} and the optimal lower bound of the iterations~\eqref{xn-I}, respectively. Then, by the definition of $\mathcal{R}$ in~\eqref{ratio-R}, there holds  
\begin{align}\label{est-IG-I}
\mathcal{R}(x_n^{IG},x_n^{I},x^*) =	\displaystyle\dfrac{\|x_n^{IG}-x^*\|}{\|x_n^{I}-x^*\|} \le \beta_n :=\displaystyle\dfrac{\mathcal{U}_n^{IG}}{\mathcal{L}_n^{I}}, \quad \mbox{ for all } n \in \mathbb{N}.
\end{align}
By using the elementary inequality~\eqref{ln-ineq} in Lemma~\ref{lem1}, for each $n \in \mathbb{N}$, one gets the following estimate
\begin{align*}
	\ln(\beta_n) &=\ln\left(\displaystyle\prod_{k=0}^n {\dfrac{[\alpha_1(1-b_k)+\alpha_2b_k][1-(1-\alpha_1)a_k]}{1-b_k-\alpha_2b_k[1-(1-\alpha_1)a_k]}}\right) \\
	&=\displaystyle\sum_{k=1}^{n}\left(\ln[\alpha_1(1-b_k)+\alpha_2b_k]-\ln\left(1-b_k-\alpha_2b_k[1-(1-\alpha_1)a_k]\right)\right) \\
	& \qquad \qquad +\displaystyle\sum_{k=1}^{n}\ln\left[1-(1-\alpha_1)a_k\right]\\
	&\le \displaystyle\sum_{k=1}^{n}[\alpha_1(1-b_k)+\alpha_2b_k-1]-\displaystyle\sum_{k=1}^{n}{\displaystyle\dfrac{-b_k-\alpha_2b_k[1-(1-\alpha_1)a_k]}{1-b_k-\alpha_2b_k[1-(1-\alpha_1)a_k]}} \\
	& \qquad \qquad -\displaystyle\sum_{k=0}^{n}{(1-\alpha_1)a_k} \\
	&=\displaystyle\sum_{k=1}^{n}[(\alpha_2-\alpha_1)b_k-(1-\alpha_1)]+\displaystyle\sum_{k=1}^{n}{\displaystyle\dfrac{b_k+\alpha_2b_k[1-(1-\alpha_1)a_k]}{1-b_k-\alpha_2b_k[1-(1-\alpha_1)a_k]}} \\ 
	& \qquad \qquad -\displaystyle\sum_{k=0}^{n}{(1-\alpha_1)a_k} \\
	&\le - \displaystyle\sum_{k \in \mathbb{N}}{\left[(1-\alpha_1)(a_k+1)+\left(\alpha_1-\alpha_2-\displaystyle\dfrac{1+\alpha_2}{\delta}\right)b_k\right]}. 
\end{align*}
Thanks to~\eqref{cond:I-IG-2}, we have $\lim\limits_{n\to\infty}{\beta_n}=0$. Therefore, by~\eqref{est-IG-I} we may conclude that
\begin{align*}
\lim\limits_{n\to\infty} \mathcal{R}(x_n^{IG},x_n^{I},x^*) = 0.
\end{align*}
That means the iteration~\eqref{xn-IG} converges faster than the iteration~\eqref{xn-I} in the sense of Definition~\ref{def-conv-rate}.
\end{proof}

\begin{theorem}\label{theo-IG-and-IM}
Given two constants $\alpha_1,\alpha_2 \in [0,1]$ such that $0<\alpha_1+\alpha_2<2$ and two parameter sequences $(a_n)$ and $(b_n)$ in $[0,1]$. Assume that there exist two positive constant $\varepsilon, \tau$ such that
\begin{align}\label{cond:IG-IM-1}
	1-a_k-\alpha_1a_k \ge \varepsilon,\ 1-b_k-\alpha_2b_k \ge \tau, \quad \text{for all } k \in \mathbb{N},
\end{align}
and 
\begin{align}\label{cond:IG-IM-2}
	\displaystyle\sum_{k \in \mathbb{N}}{\left[(1-\alpha_1)a_k+\dfrac{\alpha_1-\alpha_2}{\varepsilon\tau}b_k+1-\displaystyle\dfrac{\alpha_1}{\varepsilon\tau}\right]}=+\infty.
\end{align}
Then the iteration~\eqref{xn-IG} converges faster than the iteration~\eqref{xn-IM} if one of the following two groups of hypotheses holds 
\begin{itemize}
\item[i)] $\alpha_1 \in [0,1]$, $\alpha_2 \in [0,1)$ and $\displaystyle\sum_{k \in \mathbb{N}}{b_k}=+\infty$.
\item[ii)] $\alpha_1 \in [0,1)$, $\alpha_2=1$, $\displaystyle\sum_{k \in \mathbb{N}}a_k=+\infty$ and $\displaystyle\sum_{k \in \mathbb{N}}(a_k-b_k+1)=+\infty$.
\end{itemize}
\end{theorem}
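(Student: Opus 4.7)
The plan is to adapt the strategy of Theorem~\ref{theo-IG-and-I} to the pair \eqref{xn-IG}, \eqref{xn-IM}. The optimal upper bound $\mathcal{U}_n^{IG}$ is available from Theorem~\ref{theo-U-IG}. For the optimal lower bound of \eqref{xn-IM}, I would imitate the reverse--triangle argument used in Theorem~\ref{theo-L-IG}, applied to $T_1\in\text{Ne}_{\alpha_1}(\Omega)$ and $T_2\in\text{Ne}_{\alpha_2}(\Omega)$ (so the $\kappa$--factors drop and only the $\alpha$--upper bounds on $\|T_i(\xi)-T_i(x^*)\|$ feed into the reverse triangle inequality). This produces
\begin{align*}
\mathcal{L}_n^{IM} = \prod_{k=0}^n [1-(1+\alpha_1)a_k]\,[1-(1+\alpha_2)b_k],
\end{align*}
which is strictly positive thanks to \eqref{cond:IG-IM-1}. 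Writing $\gamma_n:=\mathcal{U}_n^{IG}/\mathcal{L}_n^{IM}$, the definition of $\mathcal{R}$ immediately gives $\mathcal{R}(x_n^{IG},x_n^{IM},x^*)\le \gamma_n$, so the whole task reduces to showing $\gamma_n\to 0$.

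Before the core estimate, I would verify that both sequences converge to $x^*$, since the ratio $\mathcal{R}$ is meaningful only in that case. Under hypothesis $i)$, $\alpha_2<1$ together with $\sum_k b_k=+\infty$ forces $(1-\alpha_2)\sum_k b_k=+\infty$, and this single divergent sum triggers both Corollary~\ref{cor-IG} for $(x_n^{IG})$ and the upper--bound analysis of \eqref{xn-IM} (whose natural optimal upper bound $\prod_k[1-(1-\alpha_1)a_k][1-(1-\alpha_2)b_k]$ vanishes under the same condition). Hypothesis $ii)$ is handled symmetrically via the divergence of $\sum_k a_k$ combined with $\alpha_1<1$, which drives the $a$--part of the sum to infinity in both cases.

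The main computation is to show $\ln\gamma_n\to-\infty$. The key move, which distinguishes this argument from Theorem~\ref{theo-IG-and-I}, is to bound the full denominator by the single constant $\varepsilon\tau$ using \eqref{cond:IG-IM-1}, which yields
\begin{align*}
\gamma_n \le \prod_{k=0}^n \frac{[1-(1-\alpha_1)a_k]\,[\alpha_1+(\alpha_2-\alpha_1)b_k]}{\varepsilon\tau}.
\end{align*}
Applying $\ln(1+x)\le x$ from Lemma~\ref{lem1} to $1-(1-\alpha_1)a_k$, and, after rewriting $(\alpha_1+(\alpha_2-\alpha_1)b_k)/(\varepsilon\tau) = 1+\bigl[\alpha_1/(\varepsilon\tau)-1+(\alpha_2-\alpha_1)b_k/(\varepsilon\tau)\bigr]$, to the second ratio, I obtain
\begin{align*}
\ln\gamma_n \le -\sum_{k=0}^n \left[(1-\alpha_1)a_k + \frac{\alpha_1-\alpha_2}{\varepsilon\tau}b_k + 1 - \frac{\alpha_1}{\varepsilon\tau}\right],
\end{align*}
so \eqref{cond:IG-IM-2} forces $\ln\gamma_n\to-\infty$, i.e.\ $\gamma_n\to 0$. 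The main obstacle is precisely the choice of how to bound the denominator: splitting termwise (as in Theorem~\ref{theo-IG-and-I}, by using $1-(1+\alpha_1)a_k\ge\varepsilon$ and $1-(1+\alpha_2)b_k\ge\tau$ independently inside the log) produces a residual condition with $a_k/\varepsilon$ and $b_k/\tau$ appearing separately, and this cannot be repackaged into the form \eqref{cond:IG-IM-2}. One must collapse the two uniform lower bounds into the single product $\varepsilon\tau$ at the very first step to obtain a condition that matches the asserted hypothesis.
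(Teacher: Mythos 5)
Your proposal is correct and follows essentially the same route as the paper: both form $\gamma_n=\mathcal{U}_n^{IG}/\mathcal{L}_n^{IM}$ with $\mathcal{L}_n^{IM}=\prod_{k}[1-(1+\alpha_1)a_k][1-(1+\alpha_2)b_k]$, bound the denominator from below by $\varepsilon\tau$ using~\eqref{cond:IG-IM-1}, and apply the elementary logarithm inequality (the paper's $\ln x-\ln y\le (x-y)/y$ is exactly your $\ln(1+u)\le u$ with $u=x/y-1$) to reach the bound $\ln\gamma_n\le-\sum_k[(1-\alpha_1)a_k+\frac{\alpha_1-\alpha_2}{\varepsilon\tau}b_k+1-\frac{\alpha_1}{\varepsilon\tau}]$, whence~\eqref{cond:IG-IM-2} gives $\gamma_n\to 0$.
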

\begin{proof}
Similar to the previous proof, the iterations~\eqref{xn-IG} and~\eqref{xn-IM} determine two sequences $x_n^{IG}$ and $x_n^{IM}$ respectively, which converge to $x^*$ provided $i)$ or $ii)$. Moreover,~\eqref{xn-IG} admits the optimal upper bound $\left(\mathcal{U}^{IG}_n\right)$ and~\eqref{xn-IM} admits the optimal lower bound $\left(\mathcal{L}^{IM}_n\right)$ under assumptions~\eqref{cond:IG-IM-1} and~\eqref{cond:IG-IM-2}. By setting $\gamma_n=\displaystyle\dfrac{\mathcal{U}_n^{IG}}{\mathcal{L}_n^{IM}}$ for each  $n \in \mathbb{N}$, one has  
\begin{align}\label{est-IG-IM-2}
\mathcal{R}(x_n^{IG},x_n^{IM},x^*) \le \gamma_n,\ \mbox{ for all } n \in \mathbb{N}.
\end{align}
We now apply~\eqref{cond:IG-IM-1} and the following elementary inequality 
$$\ln(x)-\ln(y) \le \dfrac{x-y}{y}, \ \mbox{ for every } x,y >0,$$ 
we get the following estimate
\begin{align*}
	\ln(\gamma_n) &=\ln\left(\displaystyle\prod_{k=0}^n {\dfrac{[\alpha_1(1-b_k)+\alpha_2b_k][1-(1-\alpha_1)a_k]}{[1-(1+\alpha_2)b_k][1-(1+\alpha_1)a_k]}}\right) \\
	&=\displaystyle\sum_{k=0}^{n}{\ln[\alpha_1(1-b_k)+\alpha_2b_k]-\ln\left([1-(1+\alpha_2)b_k][1-(1+\alpha_1)a_k]\right)} \\
	& \qquad \qquad +\displaystyle\sum_{k=0}^{n}{\ln[1-(1-\alpha_1)a_k]} \\
	&\le\displaystyle\sum_{k=0}^{n}{\dfrac{\alpha_1(1-b_k)+\alpha_2b_k-[1-(1+\alpha_2)b_k][1-(1+\alpha_1)a_k]}{[1-(1+\alpha_2)b_k][1-(1+\alpha_1)a_k]}} \\
	& \qquad \qquad -\displaystyle\sum_{k=0}^{n}{(1-\alpha_1)a_k} \\
	&\le -\displaystyle\sum_{k \in \mathbb{N}}{\left[(1-\alpha_1)a_k+\dfrac{\alpha_1-\alpha_2}{\varepsilon\tau}b_k+1-\displaystyle\dfrac{\alpha_1}{\varepsilon\tau}\right]},
\end{align*}
for $n \in \mathbb{N}$. Combining~\eqref{est-IG-IM-2} with assumption~\eqref{cond:IG-IM-2}, it implies to 
\begin{align*}
\lim\limits_{n\to\infty} \mathcal{R}(x_n^{IG},x_n^{I},x^*) =  \lim\limits_{n\to\infty}{\gamma_n} = 0.
\end{align*}
We conclude that the iteration~\eqref{xn-IG} converges faster than~\eqref{xn-IM}.
\end{proof}

\subsection{Comparison between (G) and (I), (IM)}

Now, we will compare the convergence speed of three iterative processes (G), (I), and (IM). To avoid mistakes, we denote by $(u_n)$, $(x_n)$ and $(w_n)$ the iterative sequences created by processes (G), (I) and (IM) respectively, this means that 
\begin{align}\label{xn-G-2.2}
\begin{cases}
	u_0 \in \Omega, \\
	v_n=(1-a_n)S_1(u_n)+a_nT_1(u_n),\\
	u_{n+1}=(1-b_n)S_2(v_n)+b_nT_2(v_n), \end{cases}
\end{align}
\begin{align}\label{xn-I-2.2}
\begin{cases}
	x_0 \in \Omega, \\
	y_n=(1-a_n)x_n+a_nT_1(x_n),\\
	x_{n+1}=(1-b_n)x_n+b_nT_2(y_n), \end{cases}
\end{align}
and
\begin{align}\label{xn-IM-2.2}
\begin{cases}
	w_0 \in \Omega, \\
	z_n=(1-a_n)w_n+a_nT_1(w_n),\\
	w_{n+1}=(1-b_n)z_n+b_nT_2(z_n), \end{cases}
\end{align}
for $n \in \mathbb{N}$. Here, we assume that they have the same initial value $x_0 = w_0 = u_0$.  Let us state the following results.
\begin{theorem}
Let $(u_n)$, $(x_n)$ and $(w_n)$ be sequences defined as in~\eqref{xn-G-2.2},~\eqref{xn-I-2.2} and~\eqref{xn-IM-2.2}  respectively. Suppose that two parameter sequences $(a_n)$, $(b_n)$ belong to the following ranges
\begin{align}\label{cond-2.2}
0 \le a_n < \dfrac{1 - \beta_1}{1 - \beta_1 + 2\alpha_1}, \quad \  0 \le b_n < \dfrac{1 - \beta_2}{1 - \beta_2 + 2\alpha_2}.
\end{align}
\begin{itemize}
\item[i)] If $\displaystyle\sum_{n \in \mathbb{N}} b^*_n = +\infty$ then $(u_n)$ converges faster than $(x_n)$. 
\item[ii)] If $\displaystyle\sum_{n \in \mathbb{N}} (a^*_n + b^*_n) = +\infty$ then the  $(u_n)$ converges faster than $(w_n)$. 
\end{itemize}
Here, two shifted sequences $a^*_n$ and $b_n^*$ are defined by
\begin{align}\notag 
a^*_n = \dfrac{1 - \beta_1}{1 - \beta_1 + 2\alpha_1} - a_n, \quad \ b^*_n = \dfrac{1 - \beta_2}{1 - \beta_2 + 2\alpha_2} - b_n, \quad n \in \mathbb{N}.
\end{align}
\end{theorem}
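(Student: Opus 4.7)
The plan is to mirror the strategy of Theorems~\ref{theo-IG-and-I} and~\ref{theo-IG-and-IM}: squeeze the ratio~\eqref{ratio-R} between the optimal upper bound $\mathcal{U}_n^G$ of $(u_n)$ supplied by Theorem~\ref{theo-U-G} and optimal lower bounds $\mathcal{L}_n^I$, $\mathcal{L}_n^{IM}$ for $(x_n)$, $(w_n)$, obtained in~\cite{paper2} or by the reverse-triangle computation that yields the clean formulas $\mathcal{L}_n^{IM}=\prod_k [1-(1+\alpha_1)a_k][1-(1+\alpha_2)b_k]$ and $\mathcal{L}_n^I=\prod_k \bigl(1-b_k-\alpha_2 b_k[1-(1-\alpha_1)a_k]\bigr)$. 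A preliminary step is to verify that~\eqref{cond-2.2} is strong enough to force all factors of these lower bounds to be positive and to place all three iterations within a regime where Corollary~\ref{cor-G} (for $(u_n)$) and its analogues in~\cite{paper2} (for $(x_n)$ and $(w_n)$) guarantee convergence to $x^*$.

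For item (ii), both $\mathcal{U}_n^G$ and $\mathcal{L}_n^{IM}$ factorize cleanly, so I would take logarithms of $\mathcal{U}_n^G/\mathcal{L}_n^{IM}$ and apply $\ln x-\ln y\le (x-y)/y$ factor by factor. The decisive algebraic identity is
\begin{align*}
[\beta_i+(\alpha_i-\beta_i)t]-[1-(1+\alpha_i)t] = (1-\beta_i+2\alpha_i)\!\left[t-\tfrac{1-\beta_i}{1-\beta_i+2\alpha_i}\right],
\end{align*}
which, evaluated at $t=a_k$ or $t=b_k$, collapses to $-(1-\beta_i+2\alpha_i)a_k^*$ or $-(1-\beta_i+2\alpha_i)b_k^*$, exactly explaining the definition of the shifted sequences. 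Bounding each denominator $1-(1+\alpha_i)t\le 1$ then produces
\begin{align*}
\ln\!\left(\frac{\mathcal{U}_n^G}{\mathcal{L}_n^{IM}}\right)\le -(1-\beta_1+2\alpha_1)\sum_{k=0}^{n}a_k^* -(1-\beta_2+2\alpha_2)\sum_{k=0}^{n}b_k^*,
\end{align*}
which tends to $-\infty$ under the assumption $\sum(a_k^*+b_k^*)=+\infty$, delivering (ii).

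For item (i), the denominator of $\mathcal{L}_n^I$ couples $a_k$ and $b_k$, so I would first decouple it: use $\beta_1+(\alpha_1-\beta_1)a_k\le 1$ in the numerator and the elementary estimate $1-b_k-\alpha_2 b_k[1-(1-\alpha_1)a_k]\ge 1-(1+\alpha_2)b_k$ for the denominator. This reduces the problem to the same $b$-only ratio as in (ii), and the identity above with $i=2$ again furnishes $\ln \le -(1-\beta_2+2\alpha_2)b_k^*$ per step, so $\sum b_k^*=+\infty$ forces $\mathcal{U}_n^G/\mathcal{L}_n^I\to 0$. The main obstacle I anticipate is the bookkeeping around the positivity of the lower-bound factors: condition~\eqref{cond-2.2} is phrased in terms of $(\alpha_i,\beta_i)$, while the natural validity conditions for $\mathcal{L}_n^I$ and $\mathcal{L}_n^{IM}$ involve $\alpha_i$ alone, so one must verify that the parameter window imposed here is strong enough to cover simultaneously the ``optimal lower bound exists'' regime and the convergence regime of all three iterations, and to guarantee $1-\beta_i+2\alpha_i>0$ so that the key identity has the correct sign.
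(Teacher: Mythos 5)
Your proposal is correct and follows essentially the same route as the paper: it squeezes $\mathcal{R}$ by $\mathcal{U}_n^G/\mathcal{L}_n^{I}$ (resp.\ $\mathcal{U}_n^G/\mathcal{L}_n^{IM}$), reduces the coupled $(I)$-denominator to $1-(1+\alpha_2)b_k$, and drives the logarithm to $-\infty$ via $\ln x-\ln y\le (x-y)/y$ together with the identity generating the shifted sequences $a_n^*,b_n^*$. Your write-up is in fact more explicit than the paper's, which delegates the final step to ``similar techniques'' from Theorems~\ref{theo-IG-and-I} and~\ref{theo-IG-and-IM}.
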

\begin{proof}
It is worth mentioning that $u_n$ and $x_n$ converge to $x^*$ under assumptions in this theorem. Using the formulas of optimal upper bound of (G) and optimal lower bound of (I), we have the following estimate
\begin{align*}
\mathcal{R}(u_n,x_n,x^*) & \le \prod_{k=0}^n \dfrac{\left[\beta_2-(\beta_2-\alpha_2)b_k\right]\left[\beta_1-(\beta_1-\alpha_1)a_k\right]}{1 - b_k - \alpha_2b_k[1 - (1-\alpha_1)a_k]} \\
& \le \prod_{k=0}^n \dfrac{\beta_2-(\beta_2-\alpha_2)b_k}{1 - (1+\alpha_2)b_k}.
\end{align*}
The conditions in~\eqref{cond-2.2} guarantee that \begin{align*}
	0< 1-(1+\alpha_2)b_n \text{ and } 0< \beta_2 - (\beta_2 - \alpha_2) b_n, \text{ for all } n \in \mathbb{N}.
\end{align*} 
Therefore, with the similar techniques as in the proof presented above, we obtain the first conclusion in $i)$. Similarly, to compare the convergence speed between two iterative processes (G) and (IM), we use the following estimate
\begin{align*}
\mathcal{R}(u_n,w_n,x^*) &\leq \prod_{k=0}^n \dfrac{\left[\beta_2-(\beta_2-\alpha_2)b_k\right]\left[\beta_1-(\beta_1-\alpha_1)a_k\right]}{[1 - (1+\alpha_2)b_k][1 - (\alpha_1 + 1)a_k]}.
\end{align*}
We also have the last statement in $ii)$.
\end{proof}

\subsection{Comparison between (G) and (IG)}

To compare the convergence rate of~\eqref{xn-IG} and~\eqref{xn-G}, we assume that the iteration processes~\eqref{xn-IG} and~\eqref{xn-G} determine sequences $(x_n^{IG})$ and $(x_n^G)$ respectively with the same initial guess $x_0^{IG}=x_0^G$.
\begin{theorem}
Given five constants $\alpha_1,\kappa_1,\alpha_2,\beta_1,\beta_2\in (0,1]$ such as $\beta_2\leq \kappa_1\leq \alpha_1$ and two parameter consequences $(a_n),(b_n)$ contained in $[0,1]$. Assume that 
\begin{align}\label{cond-abn}
	a_n\leq \dfrac{1-\beta_1}{1-\beta_1+2\alpha_1} \quad \text{and} \quad b_n\leq \dfrac{\kappa_1-\beta_2}{\kappa_1-\beta_2+2\alpha_2}.
\end{align}
Let $(x_n^{IG})$ and $(x_n^G)$ be sequences determined by (IG) and (G) respectively, with $T_1\in \mathcal{N}_{\kappa_1,\alpha_1}(\Omega)$, $T_2\in \mathcal{N}_{\alpha_2}(\Omega)$, $S_1\in \mathcal{N}_{\beta_1}(\Omega)$, $S_2\in\mathcal{N}_{\beta_2}(\Omega)$ and $x^*\in \mathcal{F}(T_1,T_2,S_1,S_2)$. 
Then the sequence $(x_n^G)$ converges to $x^*$ faster than $(x_n^{IG})$, provided 
\begin{align}\label{ser}
	\displaystyle\sum_{n \in \mathbb{N}}\left(\dfrac{1-\beta_1}{1-\beta_1+2\alpha_1}-a_n\right) + \displaystyle\sum_{n \in \mathbb{N}}\left(\dfrac{\kappa_1-\beta_2}{\kappa_1-\beta_2+2\alpha_2}-b_n\right)=+\infty.
\end{align} 
\end{theorem}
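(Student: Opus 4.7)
The plan is to adapt the template from the two preceding comparison theorems. Namely, bound the ratio $\mathcal{R}(x_n^G,x_n^{IG},x^*)$ by the quotient of the optimal upper bound $\mathcal{U}_n^G$ of~\eqref{xn-G} (from Theorem~\ref{theo-U-G}) and the optimal lower bound $\mathcal{L}_n^{IG}$ of~\eqref{xn-IG} (from Theorem~\ref{theo-L-IG}), take logarithms, and use the divergence hypothesis~\eqref{ser} to push the log ratio to $-\infty$. First I would verify that the standing hypothesis $\beta_2\le \kappa_1\le \alpha_1$ together with~\eqref{cond-abn} implies the assumption~\eqref{need-IG} required to invoke Theorem~\ref{theo-L-IG}. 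A direct cross-multiplication shows $(1-\beta_1)/(1-\beta_1+2\alpha_1)< 1/(1+\kappa_1)$ reduces to $(1-\beta_1)\kappa_1<2\alpha_1$, which is immediate from $\kappa_1\le\alpha_1$, and $(\kappa_1-\beta_2)/(\kappa_1-\beta_2+2\alpha_2)<\kappa_1/(\kappa_1+\alpha_2)$ reduces to $\beta_2\alpha_2>-\alpha_2\kappa_1$, also trivial. Thus $\mathcal{L}_n^{IG}>0$, so the ratio is well-defined.

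The heart of the proof is a factor-wise estimate. Write
\[
\frac{\mathcal{U}_n^G}{\mathcal{L}_n^{IG}}=\prod_{k=0}^n f(a_k)\,g(b_k),\qquad f(a)=\frac{\beta_1+(\alpha_1-\beta_1)a}{1-(1+\kappa_1)a},\quad g(b)=\frac{\beta_2+(\alpha_2-\beta_2)b}{\kappa_1-(\kappa_1+\alpha_2)b}.
\]
A short algebraic computation gives
\[
1-f(a)=\frac{(1-\beta_1)-D_1 a}{1-(1+\kappa_1)a},\qquad D_1:=1+\kappa_1+\alpha_1-\beta_1,
\]
and, letting $A=(1-\beta_1)/(1-\beta_1+2\alpha_1)$, one checks $(1-\beta_1)-D_1A=(1-\beta_1)(\alpha_1-\kappa_1)/(1-\beta_1+2\alpha_1)\ge 0$. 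Since the denominator is at most $1$, this yields the clean bound $f(a)\le 1-D_1(A-a)$ for all $a\in[0,A]$. The analogous identity for $g$ with $D_2:=\kappa_1-\beta_2+2\alpha_2$ and $B=(\kappa_1-\beta_2)/(\kappa_1-\beta_2+2\alpha_2)$ gives $g(b)\le 1-D_2(B-b)$, where the denominator bound uses $\kappa_1\le 1$. Note $D_1\ge\alpha_1>0$ and $D_2\ge 2\alpha_2>0$.

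Applying Lemma~\ref{lem1} in the form $\ln(1-t)\le -t$ (valid since $f,g>0$ force $1-D_j(\cdot)>0$) and summing yields
\[
\ln\!\Big(\mathcal{U}_n^G/\mathcal{L}_n^{IG}\Big)\le -D_1\sum_{k=0}^n(A-a_k)-D_2\sum_{k=0}^n(B-b_k).
\]
The divergence assumption~\eqref{ser} together with $D_1,D_2>0$ then forces the right-hand side to $-\infty$, so $\mathcal{R}(x_n^G,x_n^{IG},x^*)\le \mathcal{U}_{n-1}^G/\mathcal{L}_{n-1}^{IG}\to 0$, which is the desired conclusion in the sense of Definition~\ref{def-conv-rate}. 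The main obstacle I anticipate is nothing conceptual but rather the algebraic bookkeeping in Step 2: isolating the identity $(1-\beta_1)-D_1 A=(1-\beta_1)(\alpha_1-\kappa_1)/(1-\beta_1+2\alpha_1)$ (and its analogue for $g$) is what explains the precise form of the thresholds appearing in~\eqref{cond-abn} and~\eqref{ser}, and getting the signs right so that the hypothesis $\beta_2\le\kappa_1\le\alpha_1$ is exactly what makes both factor-wise bounds work simultaneously.
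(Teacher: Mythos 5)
Your proposal is correct and follows essentially the same route as the paper: bound $\mathcal{R}(x_n^G,x_n^{IG},x^*)$ by the factored quotient $\mathcal{U}_n^{G}/\mathcal{L}_n^{IG}$, take logarithms, apply Lemma~\ref{lem1}, and let the divergence hypothesis~\eqref{ser} drive the logarithm to $-\infty$. Your explicit linear majorizations $f(a)\le 1-D_1(A-a)$ and $g(b)\le 1-D_2(B-b)$ simply carry out in detail the final step that the paper delegates to ``similar techniques as in the preceding theorems,'' and your verification that~\eqref{cond-abn} implies~\eqref{need-IG} is a welcome addition rather than a deviation.
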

\begin{proof}
By using the formulas of optimal upper bound of~\eqref{xn-G} and optimal lower bound of~\eqref{xn-IG}, we have the estimate below: \begin{align}\label{est-2.3}
\mathcal{R}(x_n^G,x_n^{IG},x^*)	\le \prod_{k=0}^n \dfrac{\left[\beta_1-(\beta_1-\alpha_1)a_k\right]\left[\beta_2-(\beta_2-\alpha_2)b_k\right]}{\left[1-(1+\alpha_1)a_k\right]\left[\kappa_1-(\kappa_1+\alpha_2)b_k\right]}=\prod_{k=0}^n a^*_k b^*_k,
\end{align}
where 
\begin{align}\notag 
	a^*_n=\frac{\beta_1-(\beta_1-\alpha_1)a_n}{1-(1+\alpha_1)a_n}, \quad  b^*_n=\dfrac{\beta_1-(\beta_1-\alpha_1)a_n}{\kappa_1-(\kappa_1+\alpha_2)b_n},
\end{align}
for all $n \in \mathbb{N}$. The conditions in~\eqref{cond-abn} guarantee that \begin{align*}
	1-(1+\alpha_1)a_n\geq \beta_1-(\beta_1-\alpha_1)a_n>0,
\end{align*} 
and
\begin{align*}
\kappa_1-(\kappa_1+\alpha_2)b_n\geq \beta_2-(\beta_2-\alpha_2)b_n>0.
\end{align*} 
This ensures two sequences $(a^*_n)$ and $(b^*_n)$ are in $(0,1]$.
On the other hand, \begin{align}\label{con-2.3}
	\ln \prod_{k=0}^n  a^*_kb^*_k=\displaystyle\sum_{k=0}^{n}\ln (a^*_k b^*_k)=\displaystyle\sum_{k=0}^{n}\ln a^*_k+\displaystyle\sum_{k=0}^{n}\ln b^*_k.
\end{align}
Since $(a^*_n),(b^*_n)\subset (0,1]$, we have $\ln a^*_k\leq 0$, $\ln b^*_k\leq 0$, for all $k \in \mathbb{N}$. Therefore, from~\eqref{con-2.3}, we deduce that \begin{align*}
	\lim\limits_{n\to \infty}\ln \prod_{k=0}^{n} a^*_kb^*_k=-\infty &\iff \sum_{k \in \mathbb{N}}\ln a^*_k=-\infty \text{ or } \sum_{k \in \mathbb{N}}\ln b^*_k=-\infty,
\end{align*}
which means \begin{align}\label{cond-2.3-1}
	\lim\limits_{n\to \infty}\prod_{k=0}^n a^*_kb^*_k=0\iff \lim\limits_{n\to \infty}\prod_{k=0}^n a^*_k=0 \text{ or } \lim\limits_{n\to \infty}\prod_{k=0}^n b^*_k=0.
\end{align}
With the similar techniques as in the proof of Theorem~\ref{theo-IG-and-I} and~\ref{theo-IG-and-IM}, we can show that \begin{align}
	&\lim\limits_{n\to \infty}\prod_{k=0}^n a^*_k=\lim\limits_{n\to \infty}\prod_{k=0}^n \dfrac{\beta_1-(\beta_1-\alpha_1)a_k}{1-(1+\alpha_1)a_k}=0\Leftrightarrow \displaystyle\sum_{n \in \mathbb{N}}\left(\dfrac{1-\beta_1}{1-\beta_1+2\alpha_1}-a_n\right)=+\infty,\label{conv-2.3.1}\\
	&\lim\limits_{n\to \infty}\prod_{k=0}^n b^*_k=\lim\limits_{n\to \infty}\prod_{k=0}^n \dfrac{\beta_2-(\beta_2-\alpha_2)b_k}{\kappa_1-(\kappa_1+\alpha_2)b_k}=0\Leftrightarrow \displaystyle\sum_{n \in \mathbb{N}}\left(\dfrac{\kappa_1-\beta_2}{\kappa_1-\beta_2+2\alpha_2}-b_n\right)=+\infty.\label{conv-2.3.2}
\end{align}
Taking into account all estimates in~\eqref{est-2.3}-\eqref{conv-2.3.2}, we conclude that  
\begin{align*}
	\lim\limits_{n\to \infty} \mathcal{R}(x_n^G,x_n^{IG},x^*) =0,
\end{align*}
if the condition in~\eqref{ser} is valid. As a result, the iteration (G) converges faster than (IG).
\end{proof}

\section*{Conflict of Interest}
The authors declared that they have no conflict of interest.

\section*{Declarations}
Data sharing not applicable to this article as no datasets were generated or analysed during the current study.

\end{document}